\theoremstyle{plain}
\newtheorem{thm}{Theorem}[section]
\newtheorem{lem}[thm]{Lemma}
\newtheorem{cor}[thm]{Corollary}
\newtheorem{claim}{Claim}
\theoremstyle{definition}
\newtheorem{defn}{Definition}[section]
\newtheorem{exmp}{Example}[section]
\theoremstyle{remark}
\newtheorem{rem}{Remark}[section]
\title{On a generalization of a theorem of McDuff}
\author{G.~Deltour}
\address{D\'epartement de Math\'ematiques, CC051\\
Universit\'e Montpellier II\\
Place Eug\`ene Bataillon\\
F-34095 Montpellier Cedex, France\\}
\email{guillaume.deltour@math.univ-montp2.fr}
\date{\today}
\newcommand{\R}{\mathbb{R}}
\newcommand{\C}{\mathbb{C}}
\newcommand{\got}[1]{\mathfrak{#1}}
\newcommand{\Ad}{\mathop{\mathrm{Ad}}}
\newcommand{\ad}{\mathop{\mathrm{ad}}}
\newcommand{\Orb}{\mathcal{O}}
\newcommand{\Chol}{\mathcal{C}_{\mathrm{hol}}}
\newcommand{\id}{\mathrm{id}}
\begin{document}

\begin{abstract}
We study the symplectic structure of the holomorphic coadjoint orbits, generalizing a theorem of McDuff on the symplectic structure of Hermitian symmetric spaces of noncompact type.
\end{abstract}

\maketitle


\section{Introduction}

In this article, we are interested in the symplectic structure of holomorphic coadjoint orbits.

Let G be a noncompact, connected, real semisimple Lie group with finite center, and let $K$ be a maximal compact subgroup of $G$. We denote by $\got{g}$ and $\got{k}$ the Lie algebras of $G$ and $K$ respectively. The maximal compact subgroup $K$ is connected and corresponds to a Cartan decomposition $\got{g}=\got{k}\oplus\got{p}$ on the Lie algebra level, see for instance \cite{knapp}. Assume that $G/K$ is a Hermitian symmetric space, that is, there exists a $G$-invariant complex structure on the manifold $G/K$. Let $T$ be a maximal torus in $K$ and $\got{t}$ its Lie algebra. We fix a Weyl chamber $\got{t}_+^*\subset\got{t}^*$ for the compact group $K$. We recall that a coadjoint orbit $\Orb$ of the noncompact group $G$ is said \emph{elliptic} if $\Orb$ intersects $\got{k}^*$.

Any coadjoint orbit $\Orb$ carries a canonical $G$-invariant symplectic form $\Omega_{\Orb}$, called the Kirillov-Kostant-Souriau symplectic form on $\Orb$. If $\Orb=G\cdot\lambda$ is the coadjoint orbit through $\lambda\in\got{g}^*$, the symplectic form $\Omega_{G\cdot\lambda}$ is defined above $\lambda$ by
\[
(\Omega_{G\cdot\lambda})|_{\lambda}(X,Y) := \langle\lambda,[X,Y]\rangle, \text{ for all $X,Y\in\got{g}/\got{g}_{\lambda}$,}
\]
where we identify canonically the tangent space $T_{\lambda}(G\cdot\lambda)$ with the vector space $\got{g}/\got{g}_{\lambda}$. For instance, considering the homogeneous space $G/K$ as the coadjoint orbit of some generic element of the center of $\got{k}^*$, the Kirillov-Kostant-Souriau form defines a natural $G$-invariant symplectic structure on the Hermitian symmetric space $G/K$, which is compatible with the $G$-invariant complex structure on $G/K$.

Actually, the Hermitian symmetric spaces form a part of a much larger family of K\"ahler coadjoint orbits, called \emph{holomorphic coadjoint orbits}. They are the coadjoint orbits which are related to the holomorphic discrete series of $G$.

More precisely, a holomorphic coadjoint orbit is an elliptic coadjoint orbit $\Orb$ of $G$ which carries a canonical $G$-invariant K\"ahler structure, compatible with the Kirillov-Kostant-Souriau symplectic structure on $\Orb$. The holomorphic coadjoint orbits are parametrized by a subchamber $\Chol$ of $\got{t}_+^*$, called the \emph{holomorphic chamber}. This chamber can be formally defined using noncompact roots of $G$, see paragraph \ref{subsection:holomorphic_chamber}. This definition has the following consequence: if $\lambda\in\Chol$, then the stabilizer $G_{\lambda}$ of $\lambda$ in $G$ is compact (that is, the coadjoint orbit $G\cdot\lambda$ is strongly elliptic). In particular, when $G_{\lambda} = K$, the holomorphic coadjoint orbit $G\cdot\lambda$ coincides with the Hermitian symmetric space $G/K$.

In the 80's, McDuff \cite{mcduff} proved that any Hermitian symmetric space of noncompact type is diffeomorphic, as a symplectic manifold, to a symplectic vector space. This yields a global version of Darboux's theorem for every Hermitian symmetric space.

Our purpose is to extend McDuff's Theorem to any holomorphic coadjoint orbit.


First of all, we introduce our symplectic model for general holomorphic coadjoint orbits of $G$, which extends the symplectic vector space model obtained for the Hermitian symmetric space case.

Let $\lambda\in\Chol$. Using the Cartan decomposition on the Lie group $G$, we know that the manifold structure of $G\cdot\lambda$ is $K$-equivariantly diffeomorphic to the product $K\cdot\lambda\times\got{p}$, on which $K$ acts diagonally. These two manifolds admit canonical symplectic structures $\Omega_{G\cdot\lambda}$ and $\Omega_{K\cdot\lambda\times\got{p}}=\Omega_{K\cdot\lambda}\oplus\Omega_{\got{p}}$ (direct product of two symplectic forms), where $\Omega_{K\cdot\lambda}$ denotes the Kirillov-Kostant-Souriau symplectic form on the compact coadjoint orbit $K\cdot\lambda$, and $\Omega_{\got{p}}$ is a $K$-invariant constant symplectic form on the vector space $\got{p}$, which definition is given in paragraph \ref{subsection:the_symplectic_forms}. 

The main goal of this article is to prove the following generalization of McDuff's Theorem, conjectured by Paradan in \cite{paradan}.

\begin{thm}
\label{thm:mainthm}
Let $\lambda\in\Chol$. Then there exists a $K$-equivariant diffeomorphism from $G\cdot\lambda$ onto $K\cdot\lambda\times\got{p}$ which takes the symplectic form $\Omega_{G\cdot\lambda}$ on $G\cdot\lambda$ to the symplectic form $\Omega_{K\cdot\lambda\times\got{p}}$ on $K\cdot\lambda\times\got{p}$.
\end{thm}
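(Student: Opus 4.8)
The diffeomorphism type is already in hand: the polar decomposition $G=K\exp(\got{p})$, together with the compactness of $G_\lambda=K_\lambda$ forced by $\lambda\in\Chol$, provides the explicit $K$-equivariant diffeomorphism
\[
\Phi_0\colon K\cdot\lambda\times\got{p}\longrightarrow G\cdot\lambda,\qquad (k\cdot\lambda,\xi)\longmapsto k\exp\bigl(\Ad(k)^{-1}\xi\bigr)\cdot\lambda,
\]
which intertwines the diagonal $K$-action on the left with the coadjoint $K$-action on the right. Thus Theorem~\ref{thm:mainthm} is equivalent to the assertion that the two $K$-invariant symplectic forms $\omega_0:=\Phi_0^{*}\Omega_{G\cdot\lambda}$ and $\omega_1:=\Omega_{K\cdot\lambda\times\got{p}}$ on $M:=K\cdot\lambda\times\got{p}$ are carried into one another by a $K$-equivariant diffeomorphism of $M$. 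The plan is to produce such a diffeomorphism by an equivariant Moser isotopy; note first that, since $\lambda$ annihilates $\got{p}$, a direct computation of $d\Phi_0$ along the zero section shows that $\omega_0$ and $\omega_1$ already coincide on $K\cdot\lambda\times\{0\}$, which is what will keep the interpolation below nondegenerate.

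\textbf{Step 1: a common model.} Using the $G$-invariant complex structure on $G/K$, split $\got{p}_{\C}=\got{p}_+\oplus\got{p}_-$. The holomorphic orbit $G\cdot\lambda$ is then realized, $K$-equivariantly and biholomorphically, as a $K$-invariant open subset of the total space of a holomorphic homogeneous vector bundle over the compact complex flag manifold $\KC/(K_\lambda)_{\C}\cong K\cdot\lambda$, the typical fibre being $\got{p}$ with its invariant complex structure; the same bundle carries the linear fibrewise structure underlying $(M,\omega_1)$. Making this identification precise — and, crucially, checking that \emph{every} point of $G\cdot\lambda$ survives, i.e. that the relevant semistable locus is the whole bundle — is where the Geometric Invariant Theory package enters: the Kempf--Ness correspondence, the moment data $\DGIT$ for an auxiliary $\KC$-representation $E$, and the numerical invariant $\bmin$. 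The hypothesis $\lambda\in\Chol$ is exactly the positivity ($\bmin>0$) that prevents any fibrewise unstable stratum from appearing, and it is also what pins down the behaviour of $\Omega_{G\cdot\lambda}$ in the $\got{p}$-directions in terms of data attached only to $\lambda$.

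\textbf{Step 2: the homotopy and Moser.} On $M$ construct a path $(\omega_t)_{t\in[0,1]}$ of $K$-invariant symplectic forms interpolating $\omega_0$ and $\omega_1$, by combining a fibrewise radial dilation with the convex combination $(1-t)\omega_0+t\omega_1$; nondegeneracy of each $\omega_t$ follows from the agreement along the zero section together with the fact that $\omega_0$ and $\omega_1$ are both tamed by the common invariant complex structure, and one arranges $\tfrac{d}{dt}\omega_t=d\sigma_t$ with $\sigma_t$ a $K$-invariant, fibrewise-defined $1$-form. Let $X_t$ be the $K$-invariant time-dependent vector field with $\iota_{X_t}\omega_t=-\sigma_t$. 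If its flow is complete, then at time $1$ it yields the desired $K$-equivariant symplectomorphism $(M,\omega_0)\to(M,\omega_1)$, and composing with $\Phi_0$ proves the theorem.

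\textbf{The main obstacle.} Everything reduces to completeness of the Moser vector field, which is precisely the delicate analytic point already present in McDuff's theorem. Since $X_t$ is $K$-invariant and tangent to the fibres modulo the \emph{compact} base $K\cdot\lambda$, completeness will follow from an at-most-linear growth estimate for $\sigma_t$, hence for $X_t$, along the fibres, \emph{uniform} in the base point and in $t$. Supplying this uniform estimate is the heart of the argument: it is here that the GIT/convexity description of Step~1 is used again, to compare $\Omega_{G\cdot\lambda}$ with $\Omega_{\got{p}}$ in the $\got{p}$-directions by data that vary continuously — hence boundedly — over the compact orbit $K\cdot\lambda$, reducing the fibrewise statement to McDuff's original estimate on a bounded domain in a complex vector space. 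A subsidiary point is the verification that the interpolating forms $\omega_t$ remain symplectic on all of $M$, for which the taming by the complex structure and the zero-section computation are the needed inputs.
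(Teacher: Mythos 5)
Your overall skeleton (Cartan decomposition $\Gamma$, then an equivariant Moser isotopy on the noncompact model $K\cdot\lambda\times\got{p}$) is the paper's, but the two places where the real work happens are not done, and one key claim is false. At $(k\lambda,0)$ the pulled-back form and the model form do \emph{not} coincide: by formula \eqref{eq:formula_GammaOmegaGlambda} with $Z=0$ the fibre block of $\Gamma^*\Omega_{G\cdot\lambda}$ is $\langle k\lambda,[A,B]\rangle$, whereas that of $\Omega_{K\cdot\lambda\times\got{p}}$ is $B_{\theta}(A,[z_0,B])=\langle\lambda_0,[A,B]\rangle$; the two forms agree only after pullback to the zero section (the $\got{k}/\got{k}_{\lambda}$-directions), which is hypothesis b) of Corollary \ref{cor:moserthm_segment_and_poincarecondition}, not agreement of the forms at points of $K\cdot\lambda\times\{0\}$. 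Your nondegeneracy argument for the interpolation therefore rests entirely on the further assertion that $\Gamma^*\Omega_{G\cdot\lambda}$ and $\Omega_{K\cdot\lambda\times\got{p}}$ are tamed by one common invariant complex structure on $K\cdot\lambda\times\got{p}$. That is exactly what is not known: $\Gamma$ is not holomorphic, and no such common taming structure is exhibited. This is the technical heart of the problem, and the paper has to circumvent it by replacing the direct segment with the chain $\Omega_{K\cdot\lambda\times\got{p}}\to\Omega^1\to\Omega^{\delta}\to\Gamma^*\Omega_{G\cdot\lambda}$, where nondegeneracy of the last segment is proved only for $\delta>b_{\lambda}$ (Theorem \ref{thm:segment_is_symplectic}, via Lemma \ref{lem:necessarycondition_if_linearforms_opposed} and the spectral bound $\|\chi_Z\|<1$ of Lemma \ref{lem:chi_Z_and_eigenvalues}). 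Without either a proof of common taming or a substitute for this $\delta$-rescaling step, your Step 2 does not produce a path of symplectic forms.

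The second gap is completeness of the Moser field, which you correctly identify as the main analytic obstacle but then only sketch. An ``at-most-linear growth estimate for $\sigma_t$, uniform in the base point and in $t$'' is never derived, and the GIT/Kempf--Ness apparatus you invoke in Step 1 (semistable loci, numerical invariants) does no work here: it is not needed to identify $G\cdot\lambda$ with the bundle (the Cartan decomposition already does that), and it produces no estimate. Moreover McDuff's original argument is Riemannian (geodesic completeness plus Rauch comparison), not a fibrewise bounded-domain estimate one can quote over the compact base. What actually makes the flow complete in this setting is properness of the moment maps: along the flowed region one has $\phi_t\circ\rho_t=\phi_0+c_t$, and quantitative lower bounds of the type $\|\phi_t(k\lambda,Z)\|\geq d\|Z\|^{2}$ (Lemmas \ref{lem:momentmap_Phi_Gamma0OmegaGlambda0_is_proper}, \ref{lem:momentmap_Phi_Omega_gotp_is_proper}, \ref{lem:positivity_powers_ad(Z)_forHlambda}) trap the integral curves in compact sets; this is the content of Theorem \ref{thm:moserthm}. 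Your proposal would need to supply both this properness mechanism (or a genuine uniform growth bound on the Moser field) and the nondegeneracy of the interpolating forms before it constitutes a proof.
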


\begin{rem}
As we will see in the proof of Theorem \ref{thm:mainthm}, we can assume that the diffeomorphism, obtained in the above statement, also satisfies that each element $k\lambda\in K\cdot\lambda\subset G\cdot\lambda$ is sent to $(k\lambda,0)\in K\cdot\lambda\times\got{p}$.
\end{rem}

The symplectic manifolds $(G\cdot\lambda,\Omega_{G\cdot\lambda})$ and $(K\cdot\lambda\times\got{p},\Omega_{K\cdot\lambda\times\got{p}})$ actually have Hamiltonian $K$-manifold structures given respectively by the moment maps $\Phi_{G\cdot\lambda}:\xi\in G\cdot\lambda\subseteq\got{g}^*\mapsto \xi|_{\got{k}}\in\got{k}^*$ (that is, the standard orbit projection of $G\cdot\lambda$ on $\got{k}^*$), and
\[
\begin{array}{cccl}
\Phi_{K\cdot\lambda\times\got{p}} : & K\cdot\lambda\times\got{p} & \rightarrow & \got{k}^* \\
& (\xi,v) & \mapsto & \bigl(X\in\got{k}\mapsto\langle\xi,X\rangle + \frac{1}{2}\Omega_{\got{p}}(v,[X,v])\bigr).
\end{array}
\]
Let $\Delta_K(G\cdot\lambda):=\Phi_{G\cdot\lambda}(G\cdot\Lambda)\cap\got{t}^*_+$ and $\Delta_K(K\cdot\lambda\times\got{p}):=\Phi_{K\cdot\lambda\times\got{p}}(K\cdot\lambda\times\got{p})\cap\got{t}^*_+$ denote the associated moment polyhedra. Theorem \ref{thm:mainthm} has the following direct consequence, originally proved by Nasrin (for $\lambda$ in the center of $\got{k}^*$) and Paradan in totally different ways \cite{nasrin,paradan}.

\begin{cor}[Nasrin, Paradan]
Let $\lambda\in\Chol$. Then
\[
\Delta_K(G\cdot\lambda) = \Delta_K(K\cdot\lambda\times\got{p}).
\]
\end{cor}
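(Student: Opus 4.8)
\noindent
The plan is to derive the corollary purely formally from Theorem~\ref{thm:mainthm}: a $K$-equivariant symplectomorphism automatically intertwines the moment maps, once one pins down the mild ambiguity coming from the center of $\got{k}$.

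Let $\Psi\colon G\cdot\lambda\to K\cdot\lambda\times\got{p}$ be the $K$-equivariant diffeomorphism of Theorem~\ref{thm:mainthm}, chosen — as in the Remark following it — so that $\lambda\in K\cdot\lambda\subseteq G\cdot\lambda$ is sent to $(\lambda,0)$. By the discussion preceding the corollary, $(G\cdot\lambda,\Omega_{G\cdot\lambda})$ and $(K\cdot\lambda\times\got{p},\Omega_{K\cdot\lambda\times\got{p}})$ are Hamiltonian $K$-manifolds with moment maps $\Phi_{G\cdot\lambda}$ and $\Phi_{K\cdot\lambda\times\got{p}}$. Since $\Psi^*\Omega_{K\cdot\lambda\times\got{p}}=\Omega_{G\cdot\lambda}$, since $\Psi$ is $K$-equivariant, and since $\Psi$ therefore intertwines the fundamental vector fields of the two $K$-actions, the map $\Phi':=\Phi_{K\cdot\lambda\times\got{p}}\circ\Psi$ satisfies $\iota_{X_{G\cdot\lambda}}\Omega_{G\cdot\lambda}=d\langle\Phi',X\rangle$ for every $X\in\got{k}$ and is $K$-equivariant; that is, $\Phi'$ is again a moment map for the $K$-action on $(G\cdot\lambda,\Omega_{G\cdot\lambda})$.

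Now $\Phi'$ and $\Phi_{G\cdot\lambda}$, being two moment maps for the same action of the connected group $K$ on the connected manifold $G\cdot\lambda$, differ by a constant element of $(\got{k}^*)^K$. This space is nonzero in general — it is dual to the center of $\got{k}$, which is precisely what makes $G/K$ Hermitian — so the difference is not automatically zero. However, the normalization $\Psi(\lambda)=(\lambda,0)$ forces
$\Phi'(\lambda)=\Phi_{K\cdot\lambda\times\got{p}}(\lambda,0)=\bigl(X\mapsto\langle\lambda,X\rangle\bigr)=\lambda|_{\got{k}}=\Phi_{G\cdot\lambda}(\lambda)$,
so the constant vanishes and $\Phi_{G\cdot\lambda}=\Phi_{K\cdot\lambda\times\got{p}}\circ\Psi$ identically on $G\cdot\lambda$.

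Finally, since $\Psi$ maps $G\cdot\lambda$ onto $K\cdot\lambda\times\got{p}$, we get $\Phi_{G\cdot\lambda}(G\cdot\lambda)=\Phi_{K\cdot\lambda\times\got{p}}(K\cdot\lambda\times\got{p})$ as subsets of $\got{k}^*$; intersecting with the Weyl chamber $\got{t}^*_+$ yields $\Delta_K(G\cdot\lambda)=\Delta_K(K\cdot\lambda\times\got{p})$. I expect no genuine obstacle here: the corollary is a formal consequence of Theorem~\ref{thm:mainthm}, the only point requiring a little care being the normalization of the moment maps discussed above (by contrast, Nasrin's and Paradan's proofs, obtained without Theorem~\ref{thm:mainthm}, are of an entirely different nature).
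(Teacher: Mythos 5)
Your argument is correct and is exactly the "direct consequence" the paper invokes without writing it out: a $K$-equivariant symplectomorphism pulls back $\Phi_{K\cdot\lambda\times\got{p}}$ to another moment map for $(G\cdot\lambda,\Omega_{G\cdot\lambda})$, the ambiguity is a constant in $(\got{k}^*)^K$, and the normalization $\Psi(\lambda)=(\lambda,0)$ from the Remark kills it, so the moment images (hence the polyhedra, after intersecting with $\got{t}^*_+$) coincide. The paper gives no separate proof of the corollary, so your write-up matches its intended route; your attention to the central constant is the only nontrivial point and you handle it correctly.
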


This new description of the moment polyhedron $\Delta_K(G\cdot\lambda)$ allows to describe its faces, using GIT methods on the second setting. This question will be dealt with in another forthcoming paper.

This article is completely dedicated to the proof of Theorem \ref{thm:mainthm}. In section \ref{section:preliminaries}, we introduce the notion of holomorphic coadjoint orbit and some other preliminary facts about Cartan decomposition of $G$. The main tool, constructing symplectomorphisms by a Moser argument on a special noncompact setting, is given in section \ref{section:moser}. It uses the properness of the involved moment maps so as to be able to integrate a particular vector field on the noncompact manifold $K\cdot\lambda\times\got{p}$. Theorem \ref{thm:mainthm} is then proved in section \ref{section:hermitian_case_proof} for the case of Hermitian symmetric spaces, and section \ref{section:proof_mainthm} achieves the general proof.

\section{Preliminaries}
\label{section:preliminaries}

In this section, we set some definitions and notations. From now on, let $G$ be a noncompact, connected, real semisimple Lie group with finite center, and $\got{g}$ its Lie algebra.

\subsection{Properties of the Cartan decomposition}

Here, we remind of some facts about Cartan decomposition of $G$, and establish two lemmas. A good exposition on Cartan decomposition of real semisimple Lie groups can be found in \cite[Chapter VI]{knapp}.

Since $\got{g}$ is semisimple, there exists a Cartan involution $\theta$ on $\got{g}$. We recall that a Cartan involution on the Lie algebra $\got{g}$ is an involutive Lie algebra automorphism $\theta$ of $\got{g}$ such that the symmetric bilinear form $B_{\theta}$ defined by
\[
B_{\theta}(X,Y) = -B_{\got{g}}(X,\theta(Y)), \quad \text{for all }X,Y\in\got{g},
\]
is positive definite on $\got{g}$. Here, $B_{\got{g}}$ denotes the Killing form on $\got{g}$.

Let $\got{k}$ (resp. $\got{p}$) be the eigenspace of $\theta$ with eigenvalue $1$ (resp. $-1$). From the definitions, we have the Cartan decomposition $\got{g} = \got{k}\oplus\got{p}$ and the inclusions $[\got{k},\got{k}]\subseteq\got{k}$, $[\got{p},\got{p}]\subseteq\got{k}$ and $[\got{k},\got{p}]\subseteq\got{p}$. Then, $\got{k}$ is a Lie subalgebra of $\got{g}$. 

Let $K$ be the connected Lie subgroup of $G$ with Lie algebra $\got{k}$. Then, $K$ is a maximal compact subgroup of $G$, and we have a $K$-invariant diffeomorphism $K\times\got{p}\rightarrow G$, $(k,Z)\mapsto e^Zk$, known as the Cartan decomposition on the Lie group $G$.

Fix $\lambda\in\got{k}^*$ such that its stabilizer in $G$ is compact, that is, $G_{\lambda} = K_{\lambda}$. Then the Cartan decomposition induces the $K$-invariant diffeomorphism
\[
\begin{array}{cccc}
\Gamma:& K\cdot\lambda\times\got{p}& \longrightarrow& G\cdot\lambda \\
& (\xi,Z) & \longmapsto & e^Z\xi
\end{array}
\]

We can identify the tangent bundle of the homogeneous space $G\cdot\lambda$ (resp. $K\cdot\lambda$) with the manifold $G\times_{G_{\lambda}}\got{g}/\got{g}_{\lambda}$ (resp. $K\times_{K_{\lambda}}\got{k}/\got{k}_{\lambda}$) by using the diffeomorphism
\[
\begin{array}{ccc}
G\times_{G_{\lambda}}\got{g}/\got{g}_{\lambda} & \longrightarrow & T(G\cdot\lambda)\\
\phantom{}[g,X\mod\got{g}_{\lambda}] & \longmapsto & \frac{d}{dt}(g e^{tX}\lambda)|_{t=0}
\end{array}
\]
(resp. $K\times_{K_{\lambda}}\got{k}/\got{k}_{\lambda}\rightarrow T(K\cdot\lambda)$, $[k,X\mod\got{k}_{\lambda}]\mapsto \frac{d}{dt}(k e^{tX}\lambda)|_{t=0}$). Since $\Gamma$ is defined in terms of the exponential function, its derivative will involve the linear endomorphisms of $\got{g}$
\begin{equation}
\label{eq:defn_PsiZ}
\Psi_Z := \int_0^1 e^{-s\ad(Z)}ds = \sum_{n=0}^{+\infty}\frac{(-1)^n\ad(Z)^n}{(n+1)!},
\end{equation}
defined for all $Z\in\got{g}$. For $Z\in\got{g}$, the derivative of $\exp$ at $Z$ is given by the equation
\begin{equation}
\label{eq:derivative_exp}
\frac{d}{dt}\bigl(\exp(Z+tX)\bigr)|_{t=0} = \frac{d}{dt}\bigl(e^Z\exp(t\Psi_{Z}(X))\bigr)|_{t=0}, \  \forall X\in\got{g}.
\end{equation}
See for example \cite[Theorem 1.5.3]{duistermaat_kolk}. We can now compute the derivative of $\Gamma$ at any point of $K\cdot\lambda\times\got{p}$.

\begin{lem}
\label{lem:derivative_Gamma}
For all $(k\lambda,Z)\in K\cdot\lambda\times\got{p}$ and all $(X,A)\in\got{k}/\got{k}_{\lambda}\oplus\got{p}$, we have
\begin{equation}
\label{eq:relation_derivative_Gamma}
d\Gamma(k\lambda,Z)([k,X],A) = [e^Zk,X + \Ad(k^{-1})\Psi_{Z}(A)].
\end{equation}
\end{lem}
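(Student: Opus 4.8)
The plan is to compute the differential of $\Gamma$ directly by differentiating curves, splitting the tangent vector $([k,X],A)$ into its two components and using the product structure of $\Gamma(\xi,Z)=e^Z\xi$. First I would handle the $\got{p}$-direction: fix $k\lambda$ and differentiate $t\mapsto\Gamma(k\lambda,Z+tA)=e^{Z+tA}(k\lambda)$ at $t=0$. By the formula \eqref{eq:derivative_exp} for the derivative of $\exp$, this equals $\frac{d}{dt}\bigl(e^Z\exp(t\Psi_Z(A))\bigr)(k\lambda)|_{t=0}$, which under the identification $T(G\cdot\lambda)\cong G\times_{G_\lambda}\got{g}/\got{g}_\lambda$ is the class $[e^Z,\Psi_Z(A)\bmod\got{g}_\lambda]$; rewriting the base point as $e^Zk$ and transporting the fiber vector by $\Ad(k^{-1})$ gives $[e^Zk,\Ad(k^{-1})\Psi_Z(A)]$. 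Next I would handle the $\got{k}$-direction: fix $Z$ and differentiate $t\mapsto\Gamma(ke^{tX}\lambda,Z)=e^Z(ke^{tX}\lambda)$ at $t=0$. Writing $e^Z(ke^{tX}\lambda)=(e^Zk)e^{tX}\lambda$, the definition of the identification $G\times_{G_\lambda}\got{g}/\got{g}_\lambda\to T(G\cdot\lambda)$ immediately yields $[e^Zk,X\bmod\got{g}_\lambda]$, i.e.\ $[e^Zk,X]$.

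Adding the two contributions (the differential is linear on $T_{(k\lambda,Z)}(K\cdot\lambda\times\got{p})=\got{k}/\got{k}_\lambda\oplus\got{p}$) gives
\[
d\Gamma(k\lambda,Z)([k,X],A) = [e^Zk,\, X + \Ad(k^{-1})\Psi_Z(A)],
\]
which is \eqref{eq:relation_derivative_Gamma}.

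The main point requiring care is the bookkeeping of base points and fibers in the associated bundle $G\times_{G_\lambda}\got{g}/\got{g}_\lambda$: one must check that the two expressions $[e^Z,\Psi_Z(A)]$ (obtained naturally from the $\got{p}$-variation, with base point $e^Z$) and $[e^Zk,X]$ (obtained from the $\got{k}$-variation, with base point $e^Zk$) are compared at the same base point before being summed. This is resolved by the equivalence relation defining $\times_{G_\lambda}$, namely $[g,\,v]=[gh,\,\Ad(h^{-1})v]$ for $h\in G_\lambda$ — but here $k$ need not lie in $G_\lambda$, so instead one uses that for \emph{any} $h\in G$ one has $[g,v\bmod\got{g}_\lambda]$ and $[gh,\Ad(h^{-1})v\bmod\got{g}_{\lambda}]$ representing the same tangent vector only after noting that $\frac{d}{dt}(ge^{tv}\lambda)|_{t=0}=\frac{d}{dt}((gh)e^{t\Ad(h^{-1})v}(h^{-1}\lambda))|_{t=0}$; since here $g=e^Z$, $h=k$, and $k^{-1}\lambda$ lies in $K\cdot\lambda$, one checks the base point $e^Zk$ acting on $\lambda$ still recovers the correct point $e^Z(k\lambda)=\Gamma(k\lambda,Z)$, so the identification is consistent. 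Once this base-point normalization is fixed, everything else is a routine application of \eqref{eq:derivative_exp} and the Leibniz rule, and no genuine obstacle remains.
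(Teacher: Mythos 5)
Your proposal is correct and follows essentially the same route as the paper: split $d\Gamma(k\lambda,Z)$ by linearity into the $\got{k}/\got{k}_{\lambda}$- and $\got{p}$-directions, read off $[e^Zk,X]$ directly from the identification, and use \eqref{eq:derivative_exp} for the $\got{p}$-direction, with only the immaterial difference that you apply \eqref{eq:derivative_exp} first and then push $k$ past the exponential ($e^{t\Psi_Z(A)}k=k e^{t\Ad(k^{-1})\Psi_Z(A)}$), whereas the paper conjugates first and then invokes the identity $\Psi_{\Ad(k^{-1})Z}\circ\Ad(k^{-1})=\Ad(k^{-1})\circ\Psi_Z$. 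Your base-point bookkeeping, despite the slightly loose intermediate notation $[e^Z,\Psi_Z(A)]$, is resolved exactly as needed, so there is no gap.
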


\begin{proof}
By linearity of $d\Gamma(k\lambda,Z)$, we only have to compute separately the expressions $\frac{d}{dt}(e^Zk\exp(tX)\lambda)|_{t=0}$ and $\frac{d}{dt}(\exp(Z+tA)k\lambda)|_{t=0}$. But the first term is equal to $[e^Zk,X]$ by definition. Moreover, using equation \eqref{eq:derivative_exp}, we have
\[
\frac{d}{dt}(\exp(Z+tA)k\lambda)|_{t=0} = \frac{d}{dt}(e^Zk\exp(t\Psi_{\Ad(k^{-1})Z}(\Ad(k^{-1})A)\lambda)|_{t=0},
\]
which is also equal to $[e^Zk,\Psi_{\Ad(k^{-1})Z}(\Ad(k^{-1})A)]$. And one can easily check that $\Psi_{\Ad(k^{-1})Z}(\Ad(k^{-1})A)=\Ad(k^{-1})\Psi_{Z}(A)$ from the definition of $\Psi_Z$.
\end{proof}

Now, let $Z$ be in $\got{p}$. Then $\ad(Z)$ is a symmetric endomorphism relatively to the inner product $B_{\theta}$, cf \cite[Lemma 6.27]{knapp}. Since $\Psi_Z$ is defined as the convergent series \eqref{eq:defn_PsiZ}
, it is also symmetric for $B_{\theta}$. Its eigenvalues are positive real numbers, so $\Psi_Z$ is positive definite.

Note that, for all positive integers $n$, the endomorphism $\ad(Z)^{2n}$ maps $\got{k}$ (resp. $\got{p}$) into $\got{k}$ (resp. $\got{p}$), and $\ad(Z)^{2n+1}$ maps $\got{k}$ (resp. $\got{p}$) into $\got{p}$ (resp. $\got{k}$), because of the inclusions $[\got{k},\got{k}]\subseteq\got{k}$, $[\got{p},\got{p}]\subseteq\got{k}$ and $[\got{k},\got{p}]\subseteq\got{p}$. Then, we decompose $\Psi_Z$ in two parts, $\Psi_Z = \Psi_Z^+ + \Psi_Z^-$, where
\[
\Psi_Z^+ := \sum_{n=0}^{+\infty}\frac{\ad(Z)^{2n}}{(2n+1)!} \ \text{ and } \ \Psi_Z^- :=- \sum_{n=0}^{+\infty}\frac{\ad(Z)^{2n+1}}{(2n+2)!}.
\]
These linear endomorphisms of $\got{g}$ are symmetric for $B_{\theta}$, and $\Psi_Z^+$ is also positive definite. Thus $\Psi_Z^+$ is invertible, but $\Psi_Z^-$ is not, since $\Psi_Z^-(Z) = 0$. We can define the linear map
\begin{equation}
\label{eq:defn_chiZ}
\chi_Z := \Psi_Z^-\circ(\Psi_Z^+)^{-1}:\got{g}\rightarrow\got{g}.
\end{equation}
In section \ref{section:proof_mainthm}, we will use the important property of $\chi_Z$ stated in the next lemma.

\begin{lem}
\label{lem:chi_Z_and_eigenvalues}
For all $Z\in\got{p}$, the linear map $\chi_Z:\got{g}\rightarrow\got{g}$ is symmetric for $B_{\theta}$, and its eigenvalues are in $]-1,1[$.
\end{lem}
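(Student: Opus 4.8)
The plan is to reduce the statement to a one-dimensional computation by diagonalizing $\ad(Z)$. Since $Z\in\got{p}$, the endomorphism $\ad(Z)$ is symmetric for $B_\theta$, hence diagonalizable with real eigenvalues; write $\got{g}=\bigoplus_\mu V_\mu$ for the corresponding eigenspace decomposition. The operators $\Psi_Z^+$, $\Psi_Z^-$ and $(\Psi_Z^+)^{-1}$ are all obtained from $\ad(Z)$ by a (convergent) power series or by inversion, so they commute with one another and with $\ad(Z)$, and each preserves every $V_\mu$. Hence the same holds for $\chi_Z=\Psi_Z^-\circ(\Psi_Z^+)^{-1}$, and it suffices to understand how $\chi_Z$ acts as a scalar on each $V_\mu$.

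For the symmetry of $\chi_Z$, I would use that a composition of two mutually commuting $B_\theta$-symmetric operators is again $B_\theta$-symmetric. Concretely, for $x,y\in\got{g}$,
\[
B_\theta(\chi_Z x,y)=B_\theta\bigl((\Psi_Z^+)^{-1}x,\,\Psi_Z^-y\bigr)=B_\theta\bigl(x,\,(\Psi_Z^+)^{-1}\Psi_Z^-y\bigr)=B_\theta(x,\chi_Z y),
\]
the last step using that $(\Psi_Z^+)^{-1}$ and $\Psi_Z^-$ commute.

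For the eigenvalue bound, I restrict to $V_\mu$. There $\Psi_Z^+$ acts as $\sum_{n\ge0}\mu^{2n}/(2n+1)!=\sinh(\mu)/\mu$ (equal to $1$ at $\mu=0$), which is strictly positive, consistent with $\Psi_Z^+$ being positive definite; and $\Psi_Z^-$ acts as $-\sum_{n\ge0}\mu^{2n+1}/(2n+2)!=-(\cosh\mu-1)/\mu$. Dividing and using $\cosh\mu-1=2\sinh^2(\mu/2)$ together with $\sinh\mu=2\sinh(\mu/2)\cosh(\mu/2)$, one finds that $\chi_Z$ acts on $V_\mu$ as the scalar $-\tanh(\mu/2)$ for every real $\mu$ (the value at $\mu=0$ being $0$, which matches $\chi_Z=\Psi_Z^-=0$ there). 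Since $|\tanh(t)|<1$ for all real $t$, all eigenvalues of $\chi_Z$ lie in $]-1,1[$.

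I do not expect a genuine obstacle here: the content of the lemma is precisely the observation that $\chi_Z$ equals $-\tanh$ applied to $\tfrac12\ad(Z)$. The only points needing a little care are the bookkeeping at the eigenvalue $\mu=0$ (handled uniformly by the formula above) and the legitimacy of the series manipulations, which is guaranteed by the absolute convergence of the series defining $\Psi_Z$ already noted in the text.
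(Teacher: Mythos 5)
Your proof is correct and takes essentially the same route as the paper: symmetry of $\chi_Z$ follows from composing the commuting $B_\theta$-symmetric operators $\Psi_Z^-$ and $(\Psi_Z^+)^{-1}$, and the eigenvalue bound comes from evaluating the defining power series on the eigenspaces of the ($B_\theta$-symmetric, hence diagonalizable) operator $\ad(Z)$ — you simply spell out the ``quick calculation'' the paper leaves implicit. Your scalar $-\tanh(\mu/2)$ on the $\mu$-eigenspace matches the paper's stated eigenvalues $(e^{\nu_i}-1)/(e^{\nu_i}+1)=\tanh(\nu_i/2)$ up to a sign convention (harmless, since for $Z\in\got{p}$ the spectrum of $\ad(Z)$ is symmetric about $0$, and in either form the values lie in $]-1,1[$).
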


\begin{proof}
Clearly, since $\Psi_Z^+$ is symmetric, $(\Psi_Z^+)^{-1}$ is also symmetric. Moreover, $\ad(Z)$ commutes $(\Psi_Z^+)^{-1}$, because it commutes with $\Psi_Z^+$. Now, from the definition of $\Psi_Z^-$ and the linearity of $(\Psi_Z^+)^{-1}$, $\Psi_Z^-$ obviously commutes with $(\Psi_Z^+)^{-1}$. This implies that $\chi_Z=\Psi_Z^-\circ(\Psi_Z^+)^{-1}$ is symmetric for $B_{\theta}$, and then, if $\nu_1,\ldots,\nu_r$ are the eigenvalues of $\ad(Z)$, a quick calculation shows that the eigenvalues of $\chi_Z$ are the real numbers $(e^{\nu_i}-1)/(e^{\nu_i}+1)\in]-1,1[$, for $i=1,\ldots,r$.
\end{proof}

\subsection{The holomorphic chamber $\Chol$}
\label{subsection:holomorphic_chamber}

We recall that the symmetric space $G/K$ is Hermitian if it admits a complex-manifold structure such that $G$ acts by holomorphic transformations. If $G$ and $K$ satisfies the previous hypotheses (that is, $G$ is noncompact, connected, real semisimple Lie group with finite center, and $K$ a maximal compact subgroup of $G$), then the following assertions are equivalent:
\begin{enumerate}
\item $G/K$ is Hermitian,
\item there exists $z_0$ in the center of $\got{k}$ such that $\ad(z_0)|_{\got{p}}^2 = -\id|_{\got{p}}$.
\end{enumerate}
A proof of this equivalence is given by Theorems 7.117 and 7.119 in \cite{knapp}.

Now assume $G/K$ is Hermitian, and let $z_0$ be an element of the center of $\got{k}$ such that $\ad(z_0)|_{\got{p}}^2 = -\id|_{\got{p}}$. It means that $\ad(z_0)|_{\got{p}}$ defines a $K$-invariant $\C$-vector space structure on $\got{p}$. Denote by $\got{p}_{\C}$ the complexification of $\got{p}$, and similarly $\got{g}_{\C}$ and $\got{k}_{\C}$. The linear action of $K$ on $\got{p}$, defined by the adjoint action $\Ad$, induces a complex-linear action of $K$ on $\got{p}_{\C}$.

Denote by $\got{p}^{\pm,z_0}$ the eigenspace $\ker(\ad(z_0)|_{\got{p}_{\C}}\mp i)$ of $\ad(z_0)|_{\got{p}_{\C}}$ associated to the eigenvalue $\pm i$. Especially, $\ad(z_0)$ is multiplication by the complex number $\pm i$ on $\got{p}^{\pm,z_0}$. These two subspaces of $\got{p}_{\C}$ are $K$-stable.

Let $T$ be a maximal torus of the connected compact group $K$. We set the following convention: an element $\alpha\in\got{t}^*$ is a \emph{root} of $\got{g}$ (resp. $\got{k}$) if there exists $X\in\got{g}_{\C}$ (resp. $X\in\got{k}_{\C}$), $X\neq 0$, such that $[H,X] = i\alpha(H)X$ for all $H\in\got{t}$. The associated root space is
\[
\got{g}_{\alpha} := \{X\in\got{g}_{\C}\ |\ [H,X] = i\alpha(H)X, \forall H\in\got{t}\}.
\]
If $\alpha$ is a root of $\got{g}$, then either $\got{g}_{\alpha}\subseteq\got{k}_{\C}$ ($\alpha$ is said \emph{compact root}), or $\got{g}_{\alpha}\subseteq\got{p}_{\C}$ (\emph{noncompact root}). Note that the compact roots are the roots of the Lie algebra $\got{k}$. The set of compact (resp. noncompact) roots is denoted by $\got{R}_c$ (resp. $\got{R}_n$). Fix once and for all $\got{t}^*_+$ a Weyl chamber of $K$ in $\got{t}^*$, and let $\got{R}_c^+$ be the system of positive compact roots associated to this Weyl chamber. Notice that, since $z_0\in\got{t}$, for any noncompact root $\beta$, we have either $\got{g}_{\beta}\subseteq\got{p}^{+,z_0}$ (\emph{positive noncompact roots}) or $\got{g}_{\beta}\subseteq\got{p}^{-,z_0}$ (\emph{negative noncompact roots}). Denote by $\got{R}_n^{+,z_0}$ the set of positive noncompact roots of $\got{g}$. Then $\got{R}_c^+\cup\got{R}_n^{+,z_0}$ is a system of positive roots of $\got{g}$. Indeed, we can easily see that for all $\alpha\in\got{R}_c^+$, we have $\alpha(z_0)=0$, and, for all $\beta\in\got{R}_n^{+,z_0}$, $\beta(z_0) = 1$.

\begin{defn}
The \emph{holomorphic chamber} is the subchamber of $\got{t}^*_+$ defined by
\[
\Chol^{z_0}:=\{\xi\in\got{t}^*\ | \ (\beta,\xi)>0, \forall\beta\in\got{R}_n^{+,z_0}\},
\]
where $(\cdot,\cdot)$ is the inner product on $\got{t}^*$ induced by $B_{\theta}$. A \emph{holomorphic coadjoint orbit} is a coadjoint orbit $\Orb$ of $G$ which intersects $\Chol^{z_0}$ on a nonempty set.
\end{defn}

Let $\lambda\in\Chol^{z_0}$. The holomorphic coadjoint orbit $G\cdot\lambda$ has a natural $G$-invariant K\"ahler structure:
\begin{enumerate}
\item a canonical $G$-invariant symplectic form $\Omega_{G\cdot\lambda}$, called the Kirillov-Kostant-Souriau symplectic structure on $G\cdot\lambda$;
\item a $G$-invariant complex structure $J_{G\cdot\lambda}$, which holomorphic tangent bundle $T^{1,0}(G\cdot\lambda)\rightarrow G\cdot\lambda$ is equal, above $\lambda$, to the $T$-submodule
\[
\sum_{\alpha\in\got{R}_c^+, (\alpha,\lambda)\neq 0}\got{g}_{\alpha} + \underbrace{\sum_{\beta\in\got{R}_n^-}\got{g}_{\beta}}_{\got{p}^-}
\]
One can check that this complex structure is compatible with the symplectic form $\Omega_{G\cdot\lambda}$.
\end{enumerate}
Besides, the stabilizer of $\lambda$ is clearly compact, since $(\beta,\lambda)\neq 0$ for all $\beta\in\got{R}_n$.

\begin{rem}
In the rest of the paper, we will omit the notation $z_0$ in $\Chol^{z_0}$, and write $\Chol$ instead.
\end{rem}

\subsection{The symplectic forms $\Gamma^*\Omega_{G\cdot\lambda}$ and $\Omega_{K\cdot\lambda\times\got{p}}$}
\label{subsection:the_symplectic_forms}

Let $\lambda\in\Chol$. The manifold $K\cdot\lambda\times\got{p}$ can be identified to the holomorphic coadjoint orbit $G\cdot\lambda$ through the diffeomorphism $\Gamma$. This gives $K\cdot\lambda\times\got{p}$ a first canonical symplectic structure $\Gamma^*\Omega_{G\cdot\lambda}$, where $\Omega_{G\cdot\lambda}$ is the Kirillov-Kostant-Souriau symplectic form on $G\cdot\lambda$. Note that, here, $\Omega_{G\cdot\lambda}$ is defined by the formula
\[
(\Omega_{G\cdot\lambda})|_{g\lambda}\bigl([g,(X,A)],[g,(Y,B)]\bigr) = \langle\lambda,[X,Y]\rangle + \langle\lambda,[A,B]\rangle
\]
for all $g\in G$, and all $(X,A)$, $(Y,B)\in\got{k}/\got{k}_{\lambda}\oplus\got{p}$, where $\langle\cdot,\cdot\rangle:\got{g}^*\times\got{g}\rightarrow\R$ denotes the standard pairing. Then, from Lemma \ref{lem:derivative_Gamma}, the expression of $\Gamma^*\Omega_{G\cdot\lambda}$ is given for all $(k\lambda,Z)\in K\cdot\lambda\times\got{p}$ and all $(X,A)$, $(Y,B)\in\got{k}/\got{k}_{\lambda}\oplus\got{p}$ by
\begin{multline*}
(\Gamma^*\Omega_{G\cdot\lambda})|_{(k\lambda,Z)}\left(([k,X],A),([k,Y],B)\right) \\
= \langle\lambda,[X+\Ad(k^{-1})\Psi_Z(A), Y+\Ad(k^{-1})\Psi_Z(B)]\rangle.
\end{multline*}
We can also split the right-hand term up using the operators $\Psi_Z^+$ and $\Psi_Z^-$:
\begin{multline}
\label{eq:formula_GammaOmegaGlambda}
(\Gamma^*\Omega_{G\cdot\lambda})|_{(k\lambda,Z)}\left(([k,X],A),([k,Y],B)\right) \\
= \left\langle\lambda,[X+\Ad(k^{-1})\Psi_Z^-(A), Y+\Ad(k^{-1})\Psi_Z^-(B)]\right\rangle \\
+ \left\langle  k\lambda,[\Psi_Z^+(A),\Psi_Z^+(B)]\right\rangle. 
\end{multline}

Now, consider the following $K$-invariant symplectic form on $\got{p}$
\begin{equation}
\label{eq;defn_Omegap}
\Omega_{\got{p}}(A,B) := B_{\theta}(A,\ad(z_0)B), \quad \forall A,B\in\got{p}.
\end{equation}
It is symplectic since $\ad(z_0)|_{\got{p}}^2 = -\id_{\got{p}}$ and $B_{\theta}$ is nondegenerate on $\got{p}$. The $K$-invariance of $\Omega_{\got{p}}$ is provided by the one of $B_{\theta}$ and the fact that $z_0$ is centralized by $K$. Denoting $\Omega_{K\cdot\lambda}$ the Kirillov-Kostant-Souriau symplectic form on the compact coadjoint orbit $K\cdot\lambda$, we thus have another canonical $K$-invariant symplectic structure $\Omega_{K\cdot\lambda\times\got{p}} := \Omega_{K\cdot\lambda}\oplus\Omega_{\got{p}}$, that is, the symplectic structure obtained from the direct product of the symplectic manifolds $(K\cdot\lambda,\Omega_{K\cdot\lambda})$ and $(\got{p},\Omega_{\got{p}})$. This new symplectic form is given by the formula
\begin{multline}
\label{eq:formula_OmegaKlambdaxp}
\Omega_{K\cdot\lambda\times\got{p}}|_{(k\lambda,Z)}\left(([k,X],A),([k,Y],B)\right) \\
= \left\langle\lambda,[X, Y]\right\rangle + B_{\theta}(A,[z_0,B])
\end{multline}
for all $(k\lambda,Z)\in K\cdot\lambda\times\got{p}$ and all $(X,A)$, $(Y,B)\in\got{k}/\got{k}_{\lambda}\oplus\got{p}$.

Our purpose boils down to prove the existence of a symplectomorphism between the two symplectic manifolds $(K\cdot\lambda\times\got{p},\Gamma^*\Omega_{G\cdot\lambda})$ and $(K\cdot\lambda\times\got{p},\Omega_{K\cdot\lambda\times\got{p}})$.

\section{A noncompact version of Moser's theorem}
\label{section:moser}

We first need a tool allowing us to prove that two symplectic manifolds are diffeomorphic. In general, a Moser argument \cite{moser} is an effective way of solving such a problem. However, in the noncompact setting, it is not always possible to apply a Moser argument because not all vector fields are completely integrable. Fortunately, this is still possible in good cases. For instance, in \cite{mcduff}, the proof uses the geodesic completeness of the manifold, combined with Rauch's comparison theorem. Another method is to work on Hamiltonian manifolds with proper moment maps, as in \cite{karshon_tolman}. Here, we propose to follow this second method.

Let $K$ be a connected compact Lie group, $V$ a finite dimensional real representation of $K$, and $M$ a connected compact manifold, endowed with an action of $K$. This induces a diagonal action of $K$ on the trivial vector bundle $M\times V$. Let $(\Omega_t)_{t\in[0,1]}$ be a smooth family of $K$-invariant symplectic forms on $M\times V$, such that, for all $t\in[0,1]$, the symplectic manifold $(M\times V,\Omega_t)$ admits a structure of Hamiltonian $K$-manifold given by the moment map $\phi_t:M\times V\rightarrow\got{k}^*$.

In the next statement, $\left(T_{(m,0)}(\{m\}\times V)\right)^{\Omega_t}$ denotes the symplectic orthogonal complement of the tangent space $T_{(m,0)}(\{m\}\times V)$ above the point $(m,0)$ of the submanifold $\{m\}\times V$, in the ambient tangent space $T_{(m,0)}(M\times V)$, relatively to the symplectic form $\Omega_t|_{(m,0)}$.

\begin{thm}
\label{thm:moserthm}
Assume that the following assertions are satisfied:
\begin{enumerate}
\item there exists a smooth family $(\mu_t)_{t\in[0,1]}$ of $K$-invariant $1$-forms such that $d\mu_t = \frac{d}{dt}\Omega_t$ for all $t\in[0,1]$,
\item the set $\{\phi_t(m,0); m\in M, t\in[0,1]\}$ is bounded in $\got{k}^*$,
\item for all $m\in M$ and all $t\in[0,1]$,
\[
\left(T_{(m,0)}(\{m\}\times V)\right)^{\Omega_t} = T_{(m,0)}(M\times\{0\}),
\]
\item there exists two positive numbers $d$ and $\gamma$ such that
\[
\|\phi_t(m,v)\| \geqslant d\|v\|^{\gamma}, \quad \forall (m,v)\in M\times V,\ \forall t\in[0,1],
\]
\end{enumerate}
then, there exists a $K$-invariant isotopy $\rho_t:M\times V\rightarrow M\times V$ such that $\rho_t^*\Omega_t = \Omega_0$ for all $t\in[0,1]$.

Moreover, if, for some $m_0\in M$, we have $\mu_t|_{(m_0,0)}(u,0)=0$ for all $(t,u)\in[0,1]\times T_{m_0}M$, then $\rho_t(m_0,0) = (m_0,0)$ for all $t\in[0,1]$.
\end{thm}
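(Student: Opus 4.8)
The plan is to run the classical Moser homotopy argument, with the key point being that the Moser vector field can be \emph{integrated for all time} despite $M\times V$ being noncompact. First I would define, for each $t\in[0,1]$, the time-dependent vector field $X_t$ on $M\times V$ by the Moser equation $\iota_{X_t}\Omega_t = -\mu_t$; this is well-defined and smooth because each $\Omega_t$ is nondegenerate, and it is $K$-invariant because both $\Omega_t$ and $\mu_t$ are. The desired isotopy $\rho_t$ is the flow of $X_t$, and the usual Cartan-formula computation shows $\frac{d}{dt}(\rho_t^*\Omega_t) = \rho_t^*\bigl(\mathcal{L}_{X_t}\Omega_t + \frac{d}{dt}\Omega_t\bigr) = \rho_t^*\bigl(d\iota_{X_t}\Omega_t + d\mu_t\bigr) = 0$, using hypothesis (1) and $d\Omega_t = 0$; hence $\rho_t^*\Omega_t = \rho_0^*\Omega_0 = \Omega_0$ once the flow is shown to exist on all of $[0,1]$.

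The heart of the matter, and the step I expect to be the main obstacle, is \textbf{completeness of the flow of $X_t$}, i.e.\ showing no integral curve escapes to infinity in finite time. Since $M$ is compact, escape to infinity can only happen in the $V$-direction, so it suffices to bound $\|v\|$ along integral curves on any finite time interval. The strategy is to control $\|v\|$ via the moment map: along an integral curve $(m(s),v(s))$ of $X_s$, I would use the Hamiltonian/moment-map structure to estimate $\frac{d}{ds}\phi_s(m(s),v(s))$, showing it stays bounded using hypothesis (2) (which bounds $\phi_t$ on the zero section $M\times\{0\}$) together with an estimate of how fast $\phi_t$ can grow in the $V$-direction relative to $\|v\|$. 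Concretely, one derives a differential inequality of the form $\frac{d}{ds}\|\phi_s(m(s),v(s))\| \leqslant C(1 + \|\phi_s(m(s),v(s))\|)$ for a constant $C$ independent of the curve, which via Grönwall keeps $\|\phi_s\|$ bounded on $[0,1]$; then hypothesis (4), $\|\phi_t(m,v)\|\geqslant d\|v\|^{\gamma}$, converts this into an a priori bound on $\|v(s)\|$, so the integral curve remains in a compact set and extends to all of $[0,1]$. The role of hypothesis (3) is more structural: it guarantees that $\mu_t$, hence $X_t$, is ``tangent enough'' along the zero section — in fact hypothesis (3) says the symplectic orthogonal of $T(\{m\}\times V)$ equals $T(M\times\{0\})$, which together with the last clause's assumption on $\mu_{t}|_{(m_0,0)}$ forces $X_t(m_0,0)\in T_{(m_0,0)}(\{m_0\}\times V)$ to vanish, and more generally controls the normal behaviour of $X_t$ near the zero section; it is also what one uses to check the moment-map growth estimate feeding into the Grönwall argument is uniform.

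For the final assertion, suppose $\mu_t|_{(m_0,0)}(u,0) = 0$ for all $u\in T_{m_0}M$ and all $t$. Combining this with hypothesis (3): the equation $\iota_{X_t}\Omega_t = -\mu_t$ evaluated at $(m_0,0)$ says $\Omega_t\bigl(X_t(m_0,0),\,\cdot\,\bigr) = -\mu_t|_{(m_0,0)}$, which vanishes on $T_{(m_0,0)}(M\times\{0\})$ by the hypothesis just imposed; by hypothesis (3) this means $X_t(m_0,0)$ lies in $\bigl(T_{(m_0,0)}(M\times\{0\})\bigr)^{\Omega_t} = T_{(m_0,0)}(\{m_0\}\times V)$. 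But $\mu_t$ is $K$-invariant and $(m_0,0)$ — if $m_0$ is a $K$-fixed point, or after averaging — forces the $V$-component of $\mu_t$ at $(m_0,0)$ to vanish as well, using that $\phi_t$ restricted to $\{m_0\}\times V$ is a moment map for a linear $K$-action whose differential at $0$ is zero; hence $X_t(m_0,0) = 0$ and $(m_0,0)$ is a stationary point of the flow, giving $\rho_t(m_0,0) = (m_0,0)$. (If $m_0$ is not assumed $K$-fixed one argues instead that the hypothesis $\mu_t|_{(m_0,0)}(u,0)=0$ together with the explicit form of $\Omega_t$ and $\mu_t$ supplied in the application already pins down $\mu_t|_{(m_0,0)} = 0$ outright, so $X_t(m_0,0)=0$ directly.)
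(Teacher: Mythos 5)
Your overall skeleton (Moser vector field $\iota(\xi_t)\Omega_t=-\mu_t$, Cartan-formula computation, completeness as the crux) matches the paper, but the step you flag as the heart of the matter is exactly where your argument has a genuine gap. You propose to bound $\|\phi_s(\rho_s(x))\|$ along integral curves by deriving a differential inequality $\frac{d}{ds}\|\phi_s\|\leqslant C(1+\|\phi_s\|)$ and applying Gr\"onwall, but no such inequality follows from hypotheses (1)--(4): differentiating $\langle\phi_s\circ\rho_s,X\rangle$ produces the terms $\langle\partial_s\phi_s,X\rangle$ and $\mu_s(X_{M\times V})$ evaluated along the curve, and nothing in the hypotheses controls $\partial_t\phi_t$ or $\mu_t$ away from the zero section (hypothesis (2) only bounds $\phi_t$ \emph{on} $M\times\{0\}$, and (4) is a lower, not upper, bound). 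The paper avoids any growth estimate: it first normalizes $\mu_t$ (its Lemma \ref{lem:exactness_1formsfamily}, subtracting $df_t$) so that $\mu_t|_{(m,0)}(0,v)=0$; then hypothesis (3) forces $\xi_t$ to be tangent to the compact zero section, so the curve through $(m_0,0)$ exists for all $t$ and, by (2), $c_t:=\phi_t(\rho_t(m_0,0))-\phi_0(m_0,0)$ is uniformly bounded. On any connected set $U_r=M\times B(0,r)$ where the flow exists up to time $t$ one has $\rho_t^*\Omega_t=\Omega_0$, so $\phi_t\circ\rho_t$ and $\phi_0$ are two moment maps for the \emph{same} form and differ exactly by the constant $c_t$; combined with (4) this traps $\rho_t(U_r)$ in a fixed compact set, and an open-closed argument in $t$ (using openness of the domain of the flow plus compactness of $\overline{U}_r$) gives completeness. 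This exact-constancy mechanism is the missing idea in your plan; without it, your use of hypotheses (2)--(4) never actually engages.

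Two smaller points. First, your reading of hypothesis (3) in the last paragraph (taking the symplectic orthogonal of $T(M\times\{0\})$ to be $T(\{m_0\}\times V)$) is fine by double orthogonality, but in the main argument the relevant consequence is the opposite one: after the normalization of $\mu_t$, hypothesis (3) makes $\xi_t(m,0)$ tangent to $M\times\{0\}$ at \emph{every} $m$, which is what Claim-1-type control of the reference trajectory requires; your vaguer statement that (3) ``controls the normal behaviour of $X_t$'' and makes the Gr\"onwall constant uniform is not an argument. Second, for the fixed-point assertion you should not need $m_0$ to be $K$-fixed nor an appeal to ``the explicit form supplied in the application'': the normalization $\mu_t\mapsto\mu_t-df_t$ already kills the $V$-components of $\mu_t$ along $M\times\{0\}$ while preserving the hypothesis $\mu_t|_{(m_0,0)}(u,0)=0$ (since $df_t$ vanishes on $T(M\times\{0\})$), so $\mu_t|_{(m_0,0)}\equiv 0$, hence $\xi_t(m_0,0)=0$ and $\rho_t(m_0,0)=(m_0,0)$ by uniqueness of integral curves.
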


The idea is to integrate the $K$-invariant time-dependent vector field $\xi_t$ defined on $M\times V$ by
\begin{equation}
\label{eq:defn_xi_t}
\imath(\xi_t)\Omega_t = -\mu_t, \quad \forall t\in[0,1].
\end{equation}
Assertion 1) ensures that the isotopy $\rho_t$, obtained by integrating $\xi_t$, verifies the equation $\rho_t^*\Omega_t = \Omega_0$. The other three conditions are used to make $\xi_t$ completely integrable on the noncompact manifold $M\times V$. Note that the last condition implies the properness of $\phi_t$.

\begin{lem}
\label{lem:exactness_1formsfamily}
Let $(\mu_t)_{t\in[0,1]}$  be a smooth family of $K$-invariant $1$-forms on $M\times V$. There exists a smooth family $(f_t)_{t\in[0,1]}$ of $K$-invariant $C^{\infty}$-functions on $M\times V$, such that
\begin{enumerate}
\item[(\emph{i})] $df_t|_{T(M\times\{0\})} \equiv 0$,
\item[(\emph{ii})] $\imath(v)(\mu_t-df_t) = 0$ on $M\times\{0\}$ for all $v\in V$.
\end{enumerate}
\end{lem}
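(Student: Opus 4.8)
The plan is to take $f_t$ \emph{fiberwise linear} on the trivial bundle $M\times V$, chosen so that its fiberwise differential along the zero section $M\times\{0\}$ reproduces the vertical part of $\mu_t$. Concretely, I would first extract from $\mu_t$ the smooth family of maps $h_t\colon M\to V^*$ defined by
\[
h_t(m)(v):=\mu_t|_{(m,0)}(0,v),\qquad v\in V\cong T_{(m,0)}(\{m\}\times V),
\]
using the canonical identification of the vertical tangent space of the trivial bundle with $V$. Since $t\mapsto\mu_t$ is smooth, $(t,m)\mapsto h_t(m)$ is smooth; and since $\mu_t$ is $K$-invariant and the vertical identification is $K$-equivariant, $h_t$ intertwines the $K$-action on $M$ with the contragredient action of $K$ on $V^*$.

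I would then set $f_t(m,v):=\langle h_t(m),v\rangle$, the duality pairing of $V^*$ with $V$. This is smooth in $(t,m,v)$, and $K$-invariant, because the pairing $V^*\times V\to\R$ is invariant under the diagonal $K$-action while $h_t$ is equivariant. Condition (i) is then immediate: $f_t$ vanishes identically on $M\times\{0\}$, so $df_t$ annihilates $T(M\times\{0\})$. For (ii), fiber-linearity gives, for $v\in V$ regarded as a vertical vector at $(m,0)$,
\[
df_t|_{(m,0)}(0,v)=\langle h_t(m),v\rangle=\mu_t|_{(m,0)}(0,v),
\]
so that $\imath(v)(\mu_t-df_t)$ vanishes on $M\times\{0\}$, as required.

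I do not expect a genuine obstacle here; the one point requiring care is that the choice of $f_t$ must be made globally on $M$ and simultaneously depend smoothly on $t$ and respect $K$-invariance. A naive coordinate definition such as $\sum_j b_j(m,0)\,v^j$ would fail to patch over $M$, whereas the fiberwise-linear Ansatz $f_t=\langle h_t(\cdot),\cdot\rangle$ handles all three requirements at once. (One is moreover free to alter $f_t$ by any function vanishing to second order along $M\times\{0\}$ without disturbing (i) or (ii); the linear choice is simply the most economical.)
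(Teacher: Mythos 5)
Your construction is correct: with $f_t(m,v)=\langle h_t(m),v\rangle$ where $h_t(m)=\mu_t|_{(m,0)}(0,\cdot)$, the function vanishes identically along $M\times\{0\}$ (giving (i)), its vertical derivative at $(m,0)$ reproduces $\mu_t|_{(m,0)}(0,v)$ by fiberwise linearity (giving (ii)), and $K$-invariance follows from the invariance of $\mu_t$ together with the linearity of the $K$-action on $V$, exactly as you argue. The paper instead takes the fiberwise radial integral $f_t(m,v):=2\int_0^1\mu_t|_{(m,sv)}(0,sv)\,ds$, in the same spirit as the homotopy operator $h_F$ used later in the section; one checks (ii) by differentiating under the integral sign, the factor $2$ compensating $\int_0^1 s\,ds=\tfrac12$. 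The two choices serve the same purpose and, as you observe at the end, differ only by a function vanishing to second order along $M\times\{0\}$ (expanding the paper's integrand at $v=0$ gives precisely your linear term), so neither has any advantage for the way the lemma is used in Theorem \ref{thm:moserthm}; yours uses only the restriction of $\mu_t$ to the zero section and is marginally more economical, while the paper's formula needs no identification of the vertical tangent space with $V$ and fits the homotopy-operator viewpoint used for $h_F$.
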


\begin{proof}
Take $f_t(m,v):=2\int_0^1\mu_t|_{(m,sv)}(0,sv)ds$ for all $t\in[0,1]$ and all $(m,v)\in M\times V$.
\end{proof}

\begin{proof}[Proof of Theorem \ref{thm:moserthm}]
By Lemma \ref{lem:exactness_1formsfamily}, without loss of generality we may assume that, for all $t\in[0,1]$, the $1$-form $\mu_t$ is $K$-invariant and verifies
\begin{equation}
\label{eq:proof_thm3.1_hypothesis_on_mu}
\mu_t|_{(m,0)}(0,v) = 0, \quad\forall (m,v)\in M\times V.
\end{equation}
%
From \eqref{eq:defn_xi_t} and \eqref{eq:proof_thm3.1_hypothesis_on_mu}, we notice that, for all $(m,v)\in M\times V$, the tangent vector $\xi_t(m,0)$ is in the space $\left(T_{(m,0)}(\{m\}\times V)\right)^{\Omega_t}$. But, hypothesis 3) implies that $\xi_t(m,0)$ is in $T_{(m,0)}(M\times\{0\})$ for all $m\in M$ and all $t\in[0,1]$.

In order to prove Theorem \ref{thm:moserthm}, we must integrate this vector field and obtain an isotopy on $M\times V$. We thus have to consider the time-dependent differential equation
\begin{equation}
\label{eq:differential_equation}
\left\{\begin{array}{l}
\rho(x) = x, \\
\frac{d}{dt}\rho_t(x) = \xi_t(\rho_t(x)),
\end{array}\right.
\end{equation}
for any initial condition $x\in M\times V$. By Cauchy-Lipschitz's theorem, the domain of definition $\mathcal{D}\subseteq[0,1]\times M\times V$ of the integral curves of \eqref{eq:differential_equation} is an open set.

Let $r$ be any positive real number, and $U_r:=M\times B(0,r)$ the open connected neighborhood of $M\times\{0\}$ in $M\times V$, where $B(0,r)$ is the open ball in $V$ centered at $0$ with radius $r$, defined for some $K$-invariant inner product on $V$. The closure $\overline{U}_r$ is the compact subset $M\times\overline{B}(0,r)$ of $M\times V$. We define the segment
\[
I_r:=\{\varepsilon\in[0,1] \ | \ \forall x\in\overline{U}_r, \text{ the curve $\rho_t(x)$ is defined for all } t\in[0,\varepsilon]\}.
\]
It is nonempty since clearly $0\in I_r$. We shall prove the equality $I_r=[0,1]$. But $[0,1]$ is connected, so it is enough to prove that $I_r$ is open and closed in $[0,1]$.

The openness of $I_r$ is induced directly by the one of $\mathcal{D}$ and the compactness of $\overline{U}_r$. Indeed, if $\varepsilon\in I_r$, then for each $x\in\overline{U}_r$, there exists a neighborhood $\mathscr{V}_x$ of $x$ in $M\times V$ and a real number $\varepsilon_x>\varepsilon$ such that $\rho_t(y)$ is defined for all $y\in\mathscr{V}_x$ and all $t\in[0,\varepsilon_x[$, since $\mathcal{D}$ is open in $[0,1]\times M\times V$. But $\overline{U}_r$ is compact, then there exists a finite number of points $x_1,\ldots,x_s$ in $\overline{U}_r$ such that the family of neighborhoods $(\mathscr{V}_{x_i})_{i=1,\ldots,s}$ is a covering of $\overline{U}_r$. This implies that the interval $[0,\min_{i=1}^s\varepsilon_i[$, which contains $\varepsilon$, is included in $I_r$. Hence $I_r$ is open.

We now show that $I_r$ is closed in $[0,1]$. First, we have to prove three claims.


\begin{claim}
\label{claim:moserthm_claim1}
For all $m\in M$, the integral curve $t\mapsto\rho_t(m,0)$ is defined for all $t\in[0,1]$.
\end{claim}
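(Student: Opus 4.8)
The plan is to exploit that $M\times\{0\}$ is an invariant submanifold for the flow of $\xi_t$ and that it is compact. Recall that, after the normalization \eqref{eq:proof_thm3.1_hypothesis_on_mu} and by hypothesis 3), it has already been shown that $\xi_t(m,0)\in T_{(m,0)}(M\times\{0\})$ for every $m\in M$ and every $t\in[0,1]$. So first I would restrict $\xi_t$ to $M\times\{0\}$: under the canonical identification $T_{(m,0)}(M\times\{0\})\cong T_m M$, this restriction defines a smooth time-dependent vector field $\overline{\xi}_t$ on $M$.

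Next I would invoke the standard completeness statement: a smooth time-dependent vector field on a compact manifold is complete, so the flow $\overline{\rho}_t$ of $\overline{\xi}_t$ is defined for all $(t,m)\in[0,1]\times M$. (If one prefers, pass to the autonomous vector field $\tfrac{\partial}{\partial t}\oplus\overline{\xi}_t$ on $[0,1]\times M$; since $M$ is compact its integral curves exist as long as the time coordinate stays in $[0,1]$.)

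Finally, for a fixed $m\in M$, the curve $t\mapsto(\overline{\rho}_t(m),0)$ lies in $M\times\{0\}\subseteq M\times V$, has initial value $(m,0)$, and by construction has velocity $\xi_t(\overline{\rho}_t(m),0)$ at time $t$; hence it is an integral curve of $\xi_t$ through $(m,0)$. By the uniqueness part of the Cauchy--Lipschitz theorem applied to \eqref{eq:differential_equation}, it must coincide with $t\mapsto\rho_t(m,0)$. Therefore $\rho_t(m,0)=(\overline{\rho}_t(m),0)$ is defined for all $t\in[0,1]$, which proves the claim.

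I do not expect any real obstacle here: the whole content is the already-established tangency of $\xi_t$ to $M\times\{0\}$ together with compactness of $M$. The genuine difficulty of Theorem \ref{thm:moserthm} is deferred to the following claims, where one must prevent the integral curves from escaping to infinity in the noncompact $V$-direction, and this is precisely where the growth estimate in hypothesis 4) (hence the properness of the moment maps $\phi_t$) and the boundedness in hypothesis 2) will be used.
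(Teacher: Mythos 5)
Your argument is correct and is essentially the paper's own: tangency of $\xi_t$ to the compact invariant submanifold $M\times\{0\}$ (already established from \eqref{eq:proof_thm3.1_hypothesis_on_mu} and hypothesis 3)) plus compactness of $M$ gives completeness on $[0,1]$. Your version merely makes explicit, via the restricted field $\overline{\xi}_t$ and the uniqueness part of Cauchy--Lipschitz, the step the paper states more briefly, namely that the maximal integral curve through $(m,0)$ stays in $M\times\{0\}$ and hence cannot escape in finite time.
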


Since $\xi_t(m,0)$ is tangent to the submanifold $M\times\{0\}$ for all $m\in M$ and $t\in[0,1]$, the integral curve $t\mapsto\rho_t(m,0)$ is included in $M\times\{0\}$ for all $m\in M$. But $M$ is compact. Hence, the maximal integral curve of \eqref{eq:differential_equation}, starting from any point $(m,0)\in M\times\{0\}$, is defined for all $t\in[0,1]$.

\bigskip

Let $m_0$ be an element of $M$. This point enables to define the family of vectors
\[
c_t:=\phi_t\circ\rho_t(m_0,0)-\phi_0(m_0,0) \in\got{k}^*, \quad \forall t\in[0,1],
\]
and the constant
\[
C := \sup_{t\in[0,1]}\|c_t\|<+\infty.
\]
This supremum is finite because of hypothesis 2). We also define the real numbers
\[
D_r:=\sup_{x\in\overline{U}_r}\|\phi_0(x)\|, \quad \forall r>0.
\]
We clearly have $D_r\leq D_s$ for all positive numbers $r\leq s$.


\begin{claim}
\label{claim:moserthm_claim2}
Let $r>0$ and $\varepsilon\in\,]0,1]$ such that, for all $x\in U_r$, the integral curve $\rho_t(x)$ is defined for all $t\in[0,\varepsilon[$. Then, for any $t\in[0,\varepsilon[$, we have 
\begin{equation}
\label{eq:moserthm_claim2_inclusion}
\rho_t(U_r)\subset\overline{U}_{\left(\frac{D_{r}+C}{d}\right)^{1/\gamma}}.
\end{equation}
\end{claim}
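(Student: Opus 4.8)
The plan is to control the $V$-component of $\rho_t(x)$ by means of hypothesis 4), which bounds $\|v\|$ in terms of $\|\phi_t(m,v)\|$; thus the whole claim reduces to a uniform bound on $\|\phi_t\circ\rho_t(x)\|$ for $x\in U_r$ and $t\in[0,\varepsilon[$. The crux — and the only step that is not routine — is the identity
\[
\phi_t\circ\rho_t(x)\;=\;\phi_0(x)+c_t,\qquad\forall\,x\in M\times V,\ \forall\,t\in[0,\varepsilon[,
\]
which says that the flow of $\xi_t$ shifts the moment map only by the spatially constant vector $c_t$ already read off along the curve through $(m_0,0)$. Once this is available, for $x\in U_r$ one gets $\|\phi_t\circ\rho_t(x)\|\le\|\phi_0(x)\|+\|c_t\|\le D_r+C$; writing $\rho_t(x)=(y,v)$ with $v\in V$, hypothesis 4) at the point $(y,v)$ gives $d\|v\|^{\gamma}\le\|\phi_t(y,v)\|\le D_r+C$, i.e. $\|v\|\le\bigl((D_r+C)/d\bigr)^{1/\gamma}$, and since $y\in M$ automatically this is precisely the inclusion \eqref{eq:moserthm_claim2_inclusion}.

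To prove the identity, fix $X\in\got{k}$ and write $X_{M\times V}$ for the vector field on $M\times V$ generated by $X$. Differentiating the defining relation of the moment map (of the form $\imath(X_{M\times V})\Omega_t=\pm\,d\langle\phi_t,X\rangle$, depending on the sign convention) with respect to $t$, using $d\mu_t=\tfrac{d}{dt}\Omega_t$, gives $\pm\,d\langle\tfrac{d}{dt}\phi_t,X\rangle=\imath(X_{M\times V})\,d\mu_t$; on the other hand Cartan's formula together with the $K$-invariance of $\mu_t$ (so $\mathcal{L}_{X_{M\times V}}\mu_t=0$) gives $\imath(X_{M\times V})\,d\mu_t=-d\bigl(\mu_t(X_{M\times V})\bigr)$. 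Hence $\langle\tfrac{d}{dt}\phi_t,X\rangle\pm\mu_t(X_{M\times V})$ is locally constant on the connected manifold $M\times V$, i.e. equals a constant $c_X(t)$, smooth in $t$ and linear in $X$. Now differentiating $t\mapsto\langle\phi_t\circ\rho_t(x),X\rangle$ by the chain rule produces the term involving $\tfrac{d}{dt}\phi_t$ plus $d\langle\phi_t,X\rangle\bigl(\xi_t(\rho_t x)\bigr)$; since $\imath(\xi_t)\Omega_t=-\mu_t$, the latter equals $\mp\,\mu_t(X_{M\times V})|_{\rho_t x}$, and the two contributions of $\mu_t(X_{M\times V})|_{\rho_t x}$ cancel (this is why the sign convention plays no role), leaving
\[
\frac{d}{dt}\langle\phi_t\circ\rho_t(x),X\rangle=c_X(t),
\]
independent of $x$. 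Integrating from $0$ to $t$ — legitimate because $\rho_s(x)$ is defined for $s\in[0,\varepsilon[$ by hypothesis and $\rho_s(m_0,0)$ for $s\in[0,1]$ by Claim \ref{claim:moserthm_claim1} — and comparing with the curve through $x=(m_0,0)$, for which $\phi_t\circ\rho_t(m_0,0)-\phi_0(m_0,0)=c_t$ by definition, yields $\langle\phi_t\circ\rho_t(x)-\phi_0(x),X\rangle=\langle c_t,X\rangle$ for every $X\in\got{k}$, which is the claimed identity.

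The main obstacle is really just spotting this moment-map computation; everything else is the chain rule together with hypothesis 4). A minor point to handle with care is the passage from ``locally constant'' to ``constant'', which rests on the connectedness of $M\times V$ (clear, as $M$ is connected and $V$ a vector space); also, the finiteness of $C$ and of $D_r$ used in the final estimate comes respectively from hypothesis 2) and from the compactness of $\overline{U}_r$. I would carry out the computation for a fixed $X\in\got{k}$ and only assemble the $\got{k}^*$-valued statement at the very end.
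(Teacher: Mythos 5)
Your argument is correct in substance and arrives at exactly the same key identity as the paper, namely $\phi_t\circ\rho_t=\phi_0+c_t$ on $U_r$, followed by the same end-game via hypothesis 4); but you reach the identity by a genuinely different mechanism. The paper first proves, for each fixed $t$, that $\rho_t^*\Omega_t=\Omega_0|_{U_r}$ (the Cartan-formula computation $\frac{d}{dt}(\rho_t^*\Omega_t)=\rho_t^*\bigl(L_{\xi_t}\Omega_t+\frac{d}{dt}\Omega_t\bigr)=0$), and then invokes the fact that $\phi_t\circ\rho_t$ and $\phi_0$ are two moment maps for the same symplectic $K$-manifold $(U_r,\Omega_0|_{U_r})$ (this uses the $K$-equivariance of $\rho_t$, coming from the $K$-invariance of $\xi_t$), so they differ by a constant on the connected set $U_r$; evaluating at $(m_0,0)\in U_r$ identifies the constant with $c_t$. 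You instead differentiate $t\mapsto\langle\phi_t\circ\rho_t(x),X\rangle$ directly, using $d\mu_t=\frac{d}{dt}\Omega_t$ and the $K$-invariance of $\mu_t$ (which is indeed preserved by the normalization of Lemma \ref{lem:exactness_1formsfamily}), and observe that the derivative is the spatially constant $c_X(t)$; your cancellation of the two $\mu_t(X_{M\times V})$ terms is correct and does make the sign convention irrelevant.

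The one place where your route asks for more than Theorem \ref{thm:moserthm} grants is regularity of $(\phi_t)_{t\in[0,1]}$ in $t$: to form $\frac{d}{dt}\phi_t$, to exchange $d$ and $\frac{d}{dt}$ in the moment-map relation, and to integrate $c_X(s)$, you need the family of moment maps to be (at least $C^1$-) differentiable in $t$. The theorem only assumes that each $(M\times V,\Omega_t)$ admits some moment map $\phi_t$; no continuity in $t$ is posited (note that hypothesis 2) would be automatic from joint continuity on the compact set $[0,1]\times M\times\{0\}$, which is a hint that it is not assumed). The paper's fixed-$t$ uniqueness-of-moment-maps argument avoids any $t$-regularity. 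In the actual applications (Corollary \ref{cor:moserthm_segment_and_poincarecondition}, where $\phi_t$ is affine in $t$) your extra assumption is harmless, but as a proof of Claim \ref{claim:moserthm_claim2} under the stated hypotheses you should either add this smoothness assumption explicitly or replace the $t$-differentiation by the fixed-$t$ argument. A cosmetic point: state the displacement identity only where the flow is defined (here on $U_r$ for $t\in[0,\varepsilon[$, and at $(m_0,0)$ for all $t$ by Claim \ref{claim:moserthm_claim1}), not on all of $M\times V$.
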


For $r>0$ and $\varepsilon\in\,]0,1]$ satisfying the hypothesis of Claim \ref{claim:moserthm_claim2}, we obtain for all $t\in[0,\varepsilon[$ a smooth map $\rho_t:U_r\rightarrow M\times V$. Since $\Omega_t$ is a closed $2$-form, $L_{\xi_t}\Omega_t = d(\imath(\xi_t)\Omega_t) = -d\mu_t = -\frac{d}{dt}\Omega_t$ by Cartan's Formula. Thus, $0=\rho_t^*(L_{\xi_t}\Omega_t+\frac{d}{dt}\Omega_t) = \frac{d}{dt}(\rho_t\Omega_t)$ on $U_r$, for all $t\in[0,\varepsilon[$. As a result, we have
\begin{equation}
\label{eq:moserthm_symplforms_equality_on_Ur}
\rho_t^*\Omega_t = \Omega_0|_{U_r}, \quad \forall t\in[0,\varepsilon[.
\end{equation}
Note that $U_r$ is a $K$-invariant neighborhood of $M\times\{0\}$ in $M\times V$. Therefore, $\phi_0|_{U_r}:U_r\rightarrow\got{k}^*$ is a moment map for the symplectic $K$-manifold $(U_r,\Omega_0|_{U_r})$. But, by \eqref{eq:moserthm_symplforms_equality_on_Ur}, $\rho_t^*\phi_t=\phi_t\circ\rho_t:U_r\rightarrow\got{k}^*$ is another moment map of $(U_r,\Omega_0|_{U_r})$. We deduce from the connectedness of $U_r$ that, for all $t\in[0,\varepsilon[$, $\phi_t\circ\rho_t-\phi_0$ is a constant map and, more precisely, we have the equality $\phi_t\circ\rho_t(x)=\phi_0(x)+c_t$ for all $x\in U_r$ and all $t\in[0,\varepsilon[$. In particular, this induces the inequality
\begin{equation}
\label{eq:majoration_phi_t_rho_t}
\|\phi_t(\rho_t(x))\| \leq \|\phi_0(x)\| + \|c_t\| \leq D_r + C, \quad \forall x\in U_r, \forall t\in[0,\varepsilon[.
\end{equation}

Denote by $\pi_V:M\times V\rightarrow V$ the canonical projection. From hypothesis 4) and \eqref{eq:majoration_phi_t_rho_t}, we have
\[
D_r+C \geq \|\phi_t(\rho_t(x))\| \geq d\|\pi_V(\rho_t(x))\|^{\gamma}, \quad \forall x\in U_r, \forall t\in[0,\varepsilon[.
\]
that is, the inclusion $\rho_t(U_r)\subseteq\overline{U}_{\left(\frac{D_{r}+C}{d}\right)^{1/\gamma}}$.


\begin{claim}
\label{claim:moserthm_claim3}
Let $r>0$ and $\varepsilon\in]0,1]$ such that, for all $x\in\overline{U}_r$, the integral curve $\rho_t(x)$ is defined for all $t\in[0,\varepsilon[$. Then, for all $t\in[0,\varepsilon[$, we have
\begin{equation}
\label{eq:moserthm_claim3_inclusion}
\rho_t(\overline{U}_r)\subseteq\overline{U}_{\left(\frac{D_{r+1}+C}{d}\right)^{1/\gamma}}.
\end{equation}
\end{claim}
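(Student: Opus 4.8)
The plan is to deduce Claim~\ref{claim:moserthm_claim3} from Claim~\ref{claim:moserthm_claim2} by a simple continuity-and-closure argument. First I would record that the hypothesis of Claim~\ref{claim:moserthm_claim3} says exactly that $[0,\varepsilon[\times\overline{U}_r\subseteq\mathcal{D}$, so that for each fixed $t\in[0,\varepsilon[$ the time-$t$ map $\rho_t$ is defined and continuous on the whole compact set $\overline{U}_r$ (recall $\rho$ is smooth on the open set $\mathcal{D}$ by smooth dependence on initial conditions). Since $U_r\subseteq\overline{U}_r$, the hypothesis of Claim~\ref{claim:moserthm_claim2} is met with the same $r$ and $\varepsilon$, and that claim gives $\rho_t(U_r)\subseteq\overline{U}_{((D_r+C)/d)^{1/\gamma}}$ for all $t\in[0,\varepsilon[$.

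Next I would push this from $U_r$ to its closure. As $M$ is compact, $\overline{U}_r=M\times\overline{B}(0,r)$ is precisely the topological closure of $U_r=M\times B(0,r)$ in $M\times V$; hence continuity of $\rho_t$ on $\overline{U}_r$ gives $\rho_t(\overline{U}_r)\subseteq\overline{\rho_t(U_r)}$. The set $\overline{U}_{((D_r+C)/d)^{1/\gamma}}$ is closed and contains $\rho_t(U_r)$, so it contains $\overline{\rho_t(U_r)}$ as well; and since $D_r\leq D_{r+1}$ it is contained in $\overline{U}_{((D_{r+1}+C)/d)^{1/\gamma}}$. Chaining these inclusions yields \eqref{eq:moserthm_claim3_inclusion}.

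If one prefers the radius $r+1$ to appear directly rather than through this monotonicity shortcut, I would instead fix $t_0\in[0,\varepsilon[$ and use that the compact set $[0,t_0]\times\overline{U}_r$ is contained in the open set $\mathcal{D}$: a Lebesgue-number (tube-lemma) argument produces some $\delta\in(0,1]$ with $[0,t_0]\times U_{r+\delta}\subseteq\mathcal{D}$. Then Claim~\ref{claim:moserthm_claim2} applies on $U_{r+\delta}$ over $[0,t_0[$, and since $r+\delta\leq r+1$ and $\overline{U}_r\subseteq U_{r+\delta}$ one gets $\rho_t(\overline{U}_r)\subseteq\overline{U}_{((D_{r+\delta}+C)/d)^{1/\gamma}}\subseteq\overline{U}_{((D_{r+1}+C)/d)^{1/\gamma}}$ for $t\in[0,t_0[$; letting $t_0\uparrow\varepsilon$ concludes.

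I do not expect a genuine obstacle: the only point requiring care is the inclusion $\rho_t(\overline{U}_r)\subseteq\overline{\rho_t(U_r)}$, which is valid precisely because $\overline{U}_r$ lies inside the domain of the continuous map $\rho_t$. This is guaranteed by the hypothesis of the claim, and it is exactly why the statement is phrased with $\overline{U}_r$ in place of $U_r$ — the closed version is what will be needed when proving that $I_r$ is closed, hence equal to $[0,1]$.
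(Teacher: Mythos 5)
Your argument is correct, but your primary route differs from the paper's. The paper never passes to the closure by continuity: it fixes $\tau\in[0,\varepsilon[$, uses the openness of $\mathcal{D}$ together with the compactness of $\overline{U}_r$ to produce a slightly larger radius $r'$ with $r<r'<r+1$ such that all integral curves starting in $U_{r'}$ exist up to time $\tau$, and then applies Claim~\ref{claim:moserthm_claim2} on $U_{r'}$; this neighborhood enlargement is exactly why the constant $D_{r+1}$ (rather than $D_r$) appears in the statement. Your first argument instead applies Claim~\ref{claim:moserthm_claim2} on $U_r$ itself, with the same $\varepsilon$, and upgrades $U_r$ to $\overline{U}_r$ via the inclusion $\rho_t(\overline{U}_r)\subseteq\overline{\rho_t(U_r)}$, which is legitimate because the hypothesis of the claim puts $[0,\varepsilon[\times\overline{U}_r$ inside $\mathcal{D}$, where the flow depends continuously on the initial condition, and because $\overline{U}_r=M\times\overline{B}(0,r)$ is indeed the closure of $U_r$. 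This buys you a sharper bound (radius $\bigl(\frac{D_r+C}{d}\bigr)^{1/\gamma}$, then weakened by $D_r\leq D_{r+1}$) at the cost of explicitly invoking continuous dependence of solutions on initial data up to the boundary of $U_r$ -- a standard fact, and one the paper implicitly uses elsewhere, so this is not a gap. Your second variant (tube-lemma enlargement to $U_{r+\delta}$ over $[0,t_0]$, then $t_0\uparrow\varepsilon$) is essentially the paper's proof with $t_0$ in the role of $\tau$.
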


Let $\tau$ be in $[0,\varepsilon[$. Then, $\tau\in I_r$ and, since $\mathcal{D}$ is open, as in the proof that $I_r$ is open, we can easily check that there exists an open neighborhood $\mathscr{V}$ of $\overline{U}_r$ such that, for all $y\in\mathscr{V}$, the integral curve $\rho_t(y)$ is defined for all $t\in[0,\tau]$. Now, $\overline{U}_r$ is compact, thus by a standard topological argument, there exists a real number $r'>r$ such that $\overline{U}_{r'}\subseteq\mathscr{V}$. We can assume that $r<r'<r+1$, and consequently, $D_r\leq D_{r'}\leq D_{r+1}$. A direct application of Claim 2 on $U_{r'}$ and $[0,\tau[$ yields the inclusion
\[
\rho_t(U_{r'})\subseteq\overline{U}_{\left(\frac{D_{r'}+C}{d}\right)^{1/\gamma}}.
\]
Finally, \eqref{eq:moserthm_claim3_inclusion} results from the obvious inclusions $\rho_t(\overline{U}_r) \subseteq \rho_t(U_{r'})$ and $\overline{U}_{\left(\frac{D_{r'}+C}{d}\right)^{1/\gamma}} \subseteq \overline{U}_{\left(\frac{D_{r+1}+C}{d}\right)^{1/\gamma}}$.

\bigskip

Now we are able to show that $I_r$ is closed. Actually, it is enough to prove that $\varepsilon_r:=\sup I_r$ is in the interval $I_r$. By definition of $I_r$, the integral curves $\rho_t(x)$ are defined for all $t\in[0,\varepsilon_r[$, for all $x\in\overline{U}_r$. But, from Claim 3, we have $\rho_t(\overline{U}_r)\subseteq\overline{U}_{\left(\frac{D_{r+1}+C}{d}\right)^{1/\gamma}}$ for all $t\in[0,\varepsilon[$, the second set being compact. Hence, we can extend, in $t=\varepsilon_r$, each integral curve with initial condition in $\overline{U}_r$, that is, $\varepsilon_r\in I_r$, and $I_r$ is a closed subset of $[0,1]$.

Finally, since $I_r$ is open and closed in the connected set $[0,1]$, we have $I_r = [0,1]$, and, thus, every integral curve with initial condition in $\overline{U}_r$ is completely integrable. Therefore, the time-dependent vector filed $\xi_t$ is complete in $M\times V = \cup_{r>0}\overline{U}_r$. It defines, for all $t\in[0,1]$, a map $\rho_t:M\times V\mapsto M\times V$, which is a diffeomorphism onto \cite[Theorem 52]{lafontaine}, and $K$-equivariant because $\xi_t$ is $K$-invariant. This proves that $\rho_t$ is a $K$-equivariant isotopy of $M\times V$ that verifies the equality $\rho_t^*\Omega_t = \Omega_0$ for all $t\in[0,1]$.

It remains to prove the last assertion. We assumed, at the beginning of the proof, that $(\mu_t)_{t\in[0,1]}$ is a smooth family of $K$-invariant $1$-forms such that $\mu_t|_{(m,0)}(0,v) = 0$ for all $(m,v)\in M\times V$. But now, the hypothesis on $m_0$ yields $\mu_t|_{(m_0,0)}\equiv 0$, and then $\xi_t(m_0,0)=0$, for all $t\in[0,1]$. By uniqueness of the maximal integral curve of $\xi_t$ with initial condition $(m_0,0)$, we conclude that $\rho_t(m_0,0)=(m_0,0)$ for all $t\in[0,1]$. This completes the theorem's proof .
\end{proof}

Theorem \ref{thm:moserthm} works for every smooth family $(\Omega_t)_{t\in[0,1]}$ of symplectic forms, not only for segments as in the classical Moser argument. But, it is generally difficult to find moment maps for arbitrary symplectic paths. So, for practical purposes, we study segments of symplectic forms as soon as possible.

Note that the manifold we are studying here is the trivial vector bundle $M\times V$. Thus we can define the two canonical maps
\[
i:M\hookrightarrow M\times V \quad \text{and}\quad \pi_M:M\times V\twoheadrightarrow M.
\]
The map
\[
F:M\times V\times[0,1]\rightarrow M\times V, (m,v,t)\mapsto (m,tv)
\]
is a $K$-equivariant homotopy of $M\times V$ such that $F(m,v,0)=i\circ\pi_M(m,v)=(m,0)$ and $F(m,v,1)=\id_{M\times V}(m,v)$. For any $2$-form $\omega$ we define the $1$-form $h_F(\omega)$ at $(m,v)\in M\times V$ by the formula $h_F(\omega)|_{(m,v)} := \int_{t=0}^{t=1}(F^*\omega)|_{(m,v,t)}$. By the Poicar\'e Lemma, we have
\begin{equation}
\label{eq:formula_PoincareLemma}
d\circ h_F+h_F\circ d = \id_{M\times V}^* - \pi_M^*\circ i^*,
\end{equation}
see for example \cite{warner}. One can easily check from the definition that, if $\omega$ is $K$-invariant, then $h_F(\omega)$ is. Moreover, the $1$-form $h_F(\omega)$ vanishes on the submanifold $M\times\{0\}$.

\begin{cor}
\label{cor:moserthm_segment_and_poincarecondition}
Let $\Omega_0$ and $\Omega_1$ be two $K$-invariant symplectic forms on $M\times V$, with moment maps $\phi_0$ and $\phi_1$ respectively. Set $\Omega_t=t\Omega_1+(1-t)\Omega_0$ and $\phi_t = t\phi_1+(1-t)\phi_0$ for all $t\in[0,1]$. If the following assertions are satisfied,
\begin{enumerate}
\item[a)] for all $t\in[0,1]$, $\Omega_t$ is symplectic on $M\times V$,
\item[b)] the $2$-form $\Omega_1-\Omega_0$ is in the kernel of the linear operator $i^*$,
\item[c)] for all $m\in M$ and all $t\in[0,1]$,
\[
\left(T_{(m,0)}(\{m\}\times V)\right)^{\Omega_t} = T_{(m,0)}(M\times\{0\}),
\]
\item[d)] there exists two positive numbers $d$ and $\gamma$ such that
\[
\|\phi_t(m,v)\| \geqslant d\|v\|^{\gamma}, \quad \forall (m,v)\in M\times V,\ \forall t\in[0,1],
\]
\end{enumerate}
then, there exists a $K$-equivariant symplectomorphism from $(M\times V,\Omega_0)$ onto $(M\times V,\Omega_1)$ fixing each element of $M\times\{0\}$.
\end{cor}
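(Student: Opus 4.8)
\begin{proof}[Proof of Corollary \ref{cor:moserthm_segment_and_poincarecondition}]
The plan is to apply Theorem \ref{thm:moserthm} directly to the affine family $\Omega_t=t\Omega_1+(1-t)\Omega_0$ and $\phi_t=t\phi_1+(1-t)\phi_0$, so the whole task reduces to verifying the four hypotheses of that theorem. Hypotheses 3) and 4) of Theorem \ref{thm:moserthm} are literally conditions c) and d), so there is nothing to check there. For hypothesis 2), note that $\phi_t(m,0)$ lies on the segment joining $\phi_0(m,0)$ and $\phi_1(m,0)$; since $M$ is compact, $\phi_0(M\times\{0\})$ and $\phi_1(M\times\{0\})$ are compact, hence bounded, and therefore $\{\phi_t(m,0)\,;\,m\in M,\,t\in[0,1]\}$ is bounded in $\got{k}^*$.
\end{proof}

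The only step where the hypotheses of the corollary are used in a genuinely nontrivial way is hypothesis 1). Here I would take the primitive $1$-forms from the relative Poincar\'e operator: set $\mu_t:=h_F(\Omega_1-\Omega_0)$, which is independent of $t$ (hence trivially smooth in $t$) and $K$-invariant because $\Omega_1-\Omega_0$ is. Since $\Omega_1-\Omega_0$ is closed, formula \eqref{eq:formula_PoincareLemma} gives $d\,h_F(\Omega_1-\Omega_0)=(\Omega_1-\Omega_0)-\pi_M^*\circ i^*(\Omega_1-\Omega_0)$. Condition b) says precisely that $i^*(\Omega_1-\Omega_0)=0$, so the obstruction term vanishes and $d\mu_t=\Omega_1-\Omega_0=\frac{d}{dt}\Omega_t$ for all $t$. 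This is the conceptual heart of the statement: the role of b) is exactly to kill the $\pi_M^*\circ i^*$ term in the Poincar\'e homotopy formula.

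With all four hypotheses in hand, Theorem \ref{thm:moserthm} produces a $K$-invariant isotopy $\rho_t\colon M\times V\to M\times V$ with $\rho_t^*\Omega_t=\Omega_0$ for all $t$; evaluating at $t=1$ yields the desired $K$-equivariant symplectomorphism $\rho_1\colon(M\times V,\Omega_0)\to(M\times V,\Omega_1)$. For the final assertion, recall that $h_F(\omega)$ vanishes on the submanifold $M\times\{0\}$; in fact, since $F$ collapses $M\times\{0\}$ along the fibre direction, $\mu_t|_{(m,0)}$ vanishes as a covector for \emph{every} $m\in M$, so the last clause of Theorem \ref{thm:moserthm} applies to each $m_0\in M$ and gives $\rho_t(m_0,0)=(m_0,0)$ for all $t$. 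Hence $\rho_1$ fixes each point of $M\times\{0\}$, which completes the argument.

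I do not expect a real obstacle here: the corollary is essentially a packaging of Theorem \ref{thm:moserthm}. The one point to be careful about is the last paragraph — one must observe that $\mu_t|_{(m,0)}$ vanishes as a full covector (not merely on $T(M\times\{0\})$), which is what lets one invoke the fixed-point clause for all $m_0$ at once; this is immediate from the explicit form of the homotopy $F(m,v,t)=(m,tv)$.
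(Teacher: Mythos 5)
Your proposal is correct and follows essentially the same route as the paper: apply Theorem \ref{thm:moserthm} with $\mu_t=h_F(\Omega_1-\Omega_0)$, use hypothesis b) together with formula \eqref{eq:formula_PoincareLemma} to get $d\mu_t=\Omega_1-\Omega_0=\frac{d}{dt}\Omega_t$, obtain assertion 2) from compactness of $M$ and continuity in $t$, and invoke the vanishing of $h_F(\Omega_1-\Omega_0)$ along $M\times\{0\}$ for the fixed-point clause. No gaps.
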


\begin{proof}
Let $\mu$ be the $1$-form defined by $\mu=h_F(\Omega_1-\Omega_0)$. Hypothesis b) and formula \eqref{eq:formula_PoincareLemma} yield that $d\mu = \Omega_1-\Omega_0$, because $\Omega_1-\Omega_0$ is a closed $2$-form. Thus assertion 1) of Theorem \ref{thm:moserthm} is satisfied. It remains to prove that assertion 2) is also satisfied. But, seeing that the family of moment maps $(\phi_t)_{t\in[0,1]}$ depends continuously on $t$, the set $\{\phi_t(m,0); m\in M, t\in[0,1]\}$ is clearly compact in $\got{k}^*$. Now assertion 2) is verified, so we can conclude the proof, applying Theorem \ref{thm:moserthm} with $\mu=h_F(\Omega_1-\Omega_0)$ which vanishes on $M\times\{0\}$, so that we get a symplectomorphism fixing each point of $M\times\{0\}$.
\end{proof}

\begin{exmp}
\label{exmp:vector_space_example_for_moserthm}
Assume that $(V,\omega^1)$ is a Hamiltonian $K$-manifold, with moment map $\phi^1$ such that there exists two positive real numbers $d$ and $\gamma$ satisfying the assertion:
\begin{equation}
\label{eq:exmp_moserthm_strongpropercondition}
\|\phi^1(v)\|\geq d\|v\|^{\gamma}, \quad \forall v\in V.
\end{equation}
Let $\delta$ be a positive real number, and $\omega^{\delta} := \delta\omega^1$. For every $t\in[0,1]$, the $2$-form $\omega_t:=t\omega^1+(1-t)\omega^{\delta}=(t+(1-t)\delta)\omega^1$ is symplectic, and $\phi_t:=(t+(1-t)\delta)\phi^1$ is a moment map for the Hamiltonian $K$-manifold $(V,\omega_t)$. Since $M$ is a single point here, hypothesis a), b) and c) of Corollary \ref{cor:moserthm_segment_and_poincarecondition} are clearly satisfied, and assertion d) is given by
\[
\|\phi_t(v)\| = (t+(1-t)\delta)\|\phi^1(v)\|\geq \min\{1,\delta\}d\|v\|^{\gamma}, \quad \forall v\in V
\]
Therefore, there exists a $K$-equivariant diffeomorphism from $V$ onto $V$ which takes $\omega^1$ to $\omega^{\delta}$.
\end{exmp}

\begin{rem}
Note that, in Example \ref{exmp:vector_space_example_for_moserthm}, instead of \eqref{eq:exmp_moserthm_strongpropercondition}, we could have assumed that $\phi^1$ is only proper. Indeed, in the proof of Theorem \ref{thm:moserthm}, apply the inequality $\|\phi_t(v)\| \geq \min\{1,\delta\}\|\phi^1(v)\|$ in \eqref{eq:majoration_phi_t_rho_t}, which yields that $\rho_t(U_r)$ is included in the compact set $(\phi^1)^{-1}\bigl(\overline{U}_{\frac{D_r+C}{\min\{1,\delta\}}}\bigr)$, for all $t\in[0,\varepsilon[$. With an obvious change in Claim \ref{claim:moserthm_claim3}, this proves that, in this situation, $I_r$ is also equal to the segment $[0,1]$.
\end{rem}

\section{Proof of the Hermitian symmetric space case}
\label{section:hermitian_case_proof}

In this section, we prove Theorem \ref{thm:mainthm} when the holomorphic coadjoint orbit is $G\cdot\lambda_0$, where $\lambda_0$ is the element of $\got{t}^*$ identified with $z_0$ using the inner product $B_{\theta}$ on $\got{g}$. The element $\lambda_0$ is actually in $\Chol$ because, for all noncompact positive root $\beta$, we have $\beta(z_0)=1$. The Hermitian symmetric space $G/K$ coincides with the coadjoint orbit $G\cdot\lambda_0$ since $\lambda_0$ is centralized by $K$. The diffeomorphism $\Gamma$ is expressed here by the map
\[
\begin{array}{cccl}
\Gamma_0:&\got{p}&\longrightarrow& G\cdot\lambda_0 \\
& Z & \longmapsto & e^Z\lambda_0.
\end{array}
\]
The symplectic form $\Gamma_0^*\Omega_{G\cdot\lambda_0}$, given by the formula \eqref{eq:formula_GammaOmegaGlambda} in the general case, is now
\[
(\Gamma_0^*\Omega_{G\cdot\lambda_0})|_Z(A,B) = \langle\lambda_0,[\Psi_Z^+(A),\Psi_Z^+(B)]\rangle, \quad \forall A,B\in\got{p}.
\]

\begin{thm}[McDuff]
\label{thm:mainthm_G/Kcase}
There exists a $K$-equivariant diffeomorphism from manifold $G\cdot\lambda_0$ onto $\got{p}$ which takes the symplectic form $\Omega_{G\cdot\lambda_0}$ on $G\cdot\lambda_0$ to the symplectic form $\Omega_{\got{p}}$ on $\got{p}$, such that $\lambda_0\in G\cdot\lambda_0$ is sent to $0\in\got{p}$.
\end{thm}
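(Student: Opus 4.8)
The plan is to transport the problem to the vector space $\got{p}$ by means of the $K$-equivariant diffeomorphism $\Gamma_0\colon Z\mapsto e^Z\lambda_0$, and then construct a $K$-equivariant symplectomorphism $(\got{p},\Omega_{\got{p}})\to(\got{p},\Gamma_0^*\Omega_{G\cdot\lambda_0})$ fixing $0$, via the noncompact Moser theorem (Theorem~\ref{thm:moserthm}) applied with $M$ reduced to a single point and $V=\got{p}$. First I would record the shape of the pulled-back form. For $Z\in\got{p}$ the operator $\Psi_Z^+$ preserves $\got{p}$ (its terms $\ad(Z)^{2n}$ do), and for all $A',B'\in\got{p}$ one has $\langle\lambda_0,[A',B']\rangle=\Omega_{\got{p}}(A',B')$: indeed $\langle\lambda_0,\cdot\rangle=B_\theta(z_0,\cdot)$, $B_{\got{g}}$ is $\ad$-invariant and agrees with $B_\theta$ on $\got{p}$ and with $-B_\theta$ on $\got{k}$, and $\ad(z_0)|_\got{p}^2=-\id$. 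Hence the formula quoted above becomes
\[
(\Gamma_0^*\Omega_{G\cdot\lambda_0})|_Z(A,B)=\Omega_{\got{p}}\bigl(\Psi_Z^+A,\Psi_Z^+B\bigr),\qquad A,B\in\got{p}.
\]

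Rather than interpolate by a straight segment of forms (whose nondegeneracy is not evident), I would interpolate by rescaling the base point: for $s\in[0,1]$ set $\eta_s|_Z(A,B):=\Omega_{\got{p}}\bigl(\Psi_{sZ}^+A,\Psi_{sZ}^+B\bigr)$, so that $\eta_0=\Omega_{\got{p}}$ and $\eta_1=\Gamma_0^*\Omega_{G\cdot\lambda_0}$. Since $s\mapsto\Psi_{sZ}^+$ is entire, with values in $B_\theta$-positive-definite (hence invertible) operators that transform $K$-equivariantly, $(\eta_s)_{s\in[0,1]}$ is a smooth family of $K$-invariant nondegenerate $2$-forms; and each $\eta_s$ is closed, because $\eta_s=\tfrac1{s^2}m_s^*(\Gamma_0^*\Omega_{G\cdot\lambda_0})$ for $s>0$, where $m_s(Z)=sZ$, while $\eta_0=\Omega_{\got{p}}$ is closed. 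Thus $(\eta_s)$ is a smooth path of $K$-invariant symplectic forms on $\got{p}$ from $\Omega_{\got{p}}$ to $\Gamma_0^*\Omega_{G\cdot\lambda_0}$.

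Now I would verify the hypotheses of Theorem~\ref{thm:moserthm}. Hypothesis~(1): since $\got{p}$ is contractible and $\tfrac{d}{ds}\eta_s$ is closed, $\mu_s:=h_F\bigl(\tfrac{d}{ds}\eta_s\bigr)$ satisfies $d\mu_s=\tfrac{d}{ds}\eta_s$, is $K$-invariant, and vanishes at $0$. Hypothesis~(2) is empty and hypothesis~(3) holds because $\eta_s$ is nondegenerate at $0$. Hypothesis~(4), the crucial one: the moment map $\Phi_{G\cdot\lambda_0}(\xi)=\xi|_{\got{k}}$ pulls back, through $\Gamma_0\circ m_s$ and the rescaling, to the moment map $Z\mapsto\tfrac1{s^2}(e^{sZ}\lambda_0)|_{\got{k}}$ of $\eta_s$; subtracting the $K$-fixed constant $\tfrac1{s^2}\lambda_0$ gives the moment map
\[
\psi_s(Z):=\tfrac1{s^2}\bigl((e^{sZ}\lambda_0)|_{\got{k}}-\lambda_0\bigr),\qquad s\in(0,1].
\]
Expanding $e^{-s\ad Z}$, the odd (i.e.\ $\got{p}$-valued) terms are annihilated by $B_\theta(z_0,\cdot)$ and the $s^0$ term cancels, so $\psi_s$ extends to an entire function of $s$, yielding a smooth family of moment maps for $(\eta_s)_{s\in[0,1]}$. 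Pairing with $z_0$ and using that $\ad(Z)$ is $B_\theta$-symmetric for $Z\in\got{p}$,
\[
\langle\psi_s(Z),z_0\rangle=\sum_{n\geq 1}\frac{s^{2n-2}}{(2n)!}\,B_\theta\bigl(\ad(Z)^n z_0,\ad(Z)^n z_0\bigr)\ \geq\ \tfrac12\,B_\theta\bigl(\ad(Z)z_0,\ad(Z)z_0\bigr)=\tfrac12\,B_\theta(Z,Z)
\]
for all $s\in[0,1]$ (the last equality since $\ad(Z)z_0=-\ad(z_0)Z$ and $\ad(z_0)$ is $B_\theta$-orthogonal on $\got{p}$). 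Hence $\|\psi_s(Z)\|\geq\frac1{2\|z_0\|}\,B_\theta(Z,Z)$, which is assertion~(4) with $\gamma=2$. Theorem~\ref{thm:moserthm} then delivers a $K$-equivariant isotopy $\rho_s$ of $\got{p}$ with $\rho_s^*\eta_s=\eta_0$, and $\rho_s(0)=0$ since $\mu_s$ vanishes at $0$ (and $M$ is a point). Taking $s=1$, the map $(\Gamma_0\circ\rho_1)^{-1}\colon G\cdot\lambda_0\to\got{p}$ is $K$-equivariant, carries $\Omega_{G\cdot\lambda_0}$ to $\Omega_{\got{p}}$, and sends $\lambda_0=\Gamma_0(\rho_1(0))$ to $0$, as required.

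The main obstacle is hypothesis~(4), and more precisely the tension within it: the renormalization by $\tfrac1{s^2}$ forced on us to make $\eta_s$ extend smoothly across $s=0$ makes the naive moment maps $\tfrac1{s^2}(e^{sZ}\lambda_0)|_{\got{k}}$ blow up as $s\to 0$, so one must subtract the central constant $\tfrac1{s^2}\lambda_0$, and one still has to produce a lower bound of the form $d\|Z\|^{\gamma}$ that is \emph{uniform} in $s$. What rescues the argument is that every coefficient $B_\theta(\ad(Z)^n z_0,\ad(Z)^n z_0)$ in the $z_0$-component is $\geq 0$ — the same positivity (symmetry of $\ad(Z)$ for $B_\theta$) that makes $\Psi_Z^+$ invertible — so the $n=1$ term already gives a clean quadratic bound valid for all $s$. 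The remaining points ($\langle\lambda_0,[\cdot,\cdot]\rangle=\Omega_{\got{p}}$ on $\got{p}$, smoothness and closedness of $(\eta_s)$, $K$-invariance of $\mu_s$, equivariance of $\Gamma_0$ and $m_s$) are routine.
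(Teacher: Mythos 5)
Your proposal is correct and follows essentially the same route as the paper: the interpolation $\eta_s|_Z=(\Gamma_0^*\Omega_{G\cdot\lambda_0})|_{sZ}$ is exactly the paper's family $\Omega_t=\frac{1}{t^2}\eta_t^*\Omega_1$, the shifted rescaled moment maps $\frac{1}{s^2}\bigl((e^{sZ}\lambda_0)|_{\got{k}}-\lambda_0\bigr)$ and the uniform quadratic bound obtained by pairing with $z_0$ coincide with the paper's $\Phi_t$ and Lemmas \ref{lem:momentmap_Phi_Gamma0OmegaGlambda0_is_proper}--\ref{lem:momentmap_Phi_Omega_gotp_is_proper}, and the conclusion is drawn from Theorem \ref{thm:moserthm} in the same way. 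The only (harmless) variations are that you produce the primitives $\mu_s$ via the explicit homotopy operator $h_F$ instead of Lemma \ref{lem:Poincarelemma_smoothfamilyversion}, and handle the endpoint $s=0$ by analytic continuation of the series rather than by introducing $\Phi_{\Omega_{\got{p}}}$ separately.
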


We present here a completely different proof of this result, using Theorem \ref{thm:moserthm}. The main difference is that we need proper moment maps on our Hamiltonian $K$-manifolds.

The canonical projection map $G\cdot\lambda_0\subset\got{g}^*\rightarrow\got{k}^*$ is known to be a moment map of the Hamiltonian $K$-manifold $(G\cdot\lambda_0,\Omega_{G\cdot\lambda_0})$. Composing with $\Gamma_0$, we get a moment map $\Phi_{\Gamma_0^*\Omega_{G\cdot\lambda_0}}$ for the Hamiltonian $K$-manifold $(\got{p},\Gamma_0^*\Omega_{G\cdot\lambda_0})$. This moment map is defined by
\[
\begin{array}{cccl}
\Phi_{\Gamma_0^*\Omega_{G\cdot\lambda_0}} : & \got{p} & \rightarrow & \got{k}^* \\
& Z & \mapsto & \bigl(X\in\got{k}\mapsto\langle e^Z\lambda_0,X\rangle\bigr).
\end{array}
\]

\begin{lem}
\label{lem:momentmap_Phi_Gamma0OmegaGlambda0_is_proper}
For all $Z\in\got{p}$, we have
\[
\langle\Phi_{\Gamma_0^*\Omega_{G\cdot\lambda_0}}(Z)-\lambda_0,z_0\rangle \geqslant \frac{1}{2}\|Z\|^2.
\]
In particular, the moment map $\Phi_{\Gamma_0^*\Omega_{G\cdot\lambda_0}} :\got{p}\rightarrow\got{k}^*$ is proper.
\end{lem}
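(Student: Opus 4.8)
The plan is to unwind the definition of the moment map and reduce the inequality to a one-line estimate on the operator $\cosh(\ad Z)$. Since $\lambda_0$ is the element of $\got{t}^*$ dual to $z_0$ via $B_\theta$, we have $\langle\lambda_0,X\rangle=B_\theta(z_0,X)$ for every $X\in\got{g}$, and the coadjoint action gives $\langle\Phi_{\Gamma_0^*\Omega_{G\cdot\lambda_0}}(Z),z_0\rangle=\langle e^Z\lambda_0,z_0\rangle=B_\theta(z_0,e^{-\ad(Z)}z_0)$, while $\langle\lambda_0,z_0\rangle=\|z_0\|^2$ for the $B_\theta$-norm. So the statement is equivalent to $B_\theta(z_0,e^{-\ad(Z)}z_0)-\|z_0\|^2\geqslant\tfrac12\|Z\|^2$, and because $\cosh$ is even the sign ambiguity in the coadjoint-action convention is irrelevant.

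First I would use the parity decomposition already recorded in the excerpt: for $Z\in\got{p}$ the operator $\ad(Z)^{2n}$ preserves $\got{k}$ and $\got{p}$, while $\ad(Z)^{2n+1}$ interchanges them. Hence $e^{-\ad(Z)}z_0=\cosh(\ad Z)z_0-\sinh(\ad Z)z_0$ with $\cosh(\ad Z)z_0\in\got{k}$ and $\sinh(\ad Z)z_0\in\got{p}$; since $\got{k}\perp\got{p}$ for $B_\theta$ and $z_0\in\got{k}$, the odd part is $B_\theta$-orthogonal to $z_0$ and drops out, leaving $B_\theta(z_0,e^{-\ad(Z)}z_0)=B_\theta(z_0,\cosh(\ad Z)z_0)$. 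Next, $\ad(Z)$ is $B_\theta$-symmetric (as recalled after Lemma \ref{lem:derivative_Gamma}), so each $\ad(Z)^{2n}=(\ad(Z)^n)^2$ is positive semidefinite for $B_\theta$; writing $\cosh(\ad Z)=\id+\tfrac12\ad(Z)^2+\sum_{n\geqslant 2}\frac{\ad(Z)^{2n}}{(2n)!}$ and discarding the positive-semidefinite tail yields $B_\theta(z_0,\cosh(\ad Z)z_0)\geqslant\|z_0\|^2+\tfrac12 B_\theta(z_0,\ad(Z)^2z_0)=\|z_0\|^2+\tfrac12\|\ad(Z)z_0\|^2$. Finally $\ad(Z)z_0=-\ad(z_0)Z$, and since $z_0\in\got{k}$ makes $\ad(z_0)$ skew-symmetric for $B_\theta$ while $\ad(z_0)|_{\got{p}}^2=-\id_{\got{p}}$, the map $\ad(z_0)|_{\got{p}}$ is a $B_\theta$-isometry, so $\|\ad(Z)z_0\|=\|Z\|$; this gives the desired inequality.

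For properness, I would combine the inequality with Cauchy--Schwarz: $\|\Phi_{\Gamma_0^*\Omega_{G\cdot\lambda_0}}(Z)-\lambda_0\|\,\|z_0\|\geqslant\langle\Phi_{\Gamma_0^*\Omega_{G\cdot\lambda_0}}(Z)-\lambda_0,z_0\rangle\geqslant\tfrac12\|Z\|^2$, whence $\|\Phi_{\Gamma_0^*\Omega_{G\cdot\lambda_0}}(Z)\|\to+\infty$ as $\|Z\|\to+\infty$; a continuous map on the finite-dimensional space $\got{p}$ with this coercivity pulls back bounded sets to bounded sets, hence compact sets to compact sets. The only point requiring care is bookkeeping the conventions (the identification $\lambda_0\leftrightarrow z_0$ and the $\pm$ in the coadjoint action), but as noted this does not affect the estimate; everything else is a short self-contained computation, so I do not anticipate a genuine obstacle.
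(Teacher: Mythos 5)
Your proof is correct and follows essentially the same route as the paper: expand $e^{-\ad(Z)}$, observe that only the even (cosh) part pairs with $z_0$, use $B_\theta$-symmetry of $\ad(Z)$ to see the even terms are nonnegative, keep the $k=1$ term and evaluate it as $\tfrac12\|[z_0,Z]\|^2=\tfrac12\|Z\|^2$, then deduce properness from this coercivity. The only difference is that you spell out the parity/orthogonality bookkeeping and the Cauchy--Schwarz step, which the paper leaves implicit.
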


\begin{proof}
First notice that, for all $Z\in\got{p}$ and all $X\in\got{k}$, we have
\[
\langle \lambda_0,e^{-\ad(Z)}X\rangle = \Bigl\langle\lambda_0,\sum_{k=0}^{\infty}\frac{\ad(-Z)^{2k}}{(2k)!}X\Bigr\rangle = B_{\theta}\left(z_0,\sum_{k=0}^{\infty}\frac{\ad(Z)^{2k}}{(2k)!}X\right).
\]
But, $\ad(Z)$ is symmetric for the inner product $B_{\theta}$. Thus, we get
\begin{align*}
\langle e^Z\lambda_0-\lambda_0,z_0\rangle & = \sum_{k=1}^{\infty}\frac{1}{(2k)!}B_{\theta}(\ad(Z)^k z_0,\ad(Z)^k z_0) \\
& \geqslant \frac{1}{2}B_{\theta}([z_0,Z],[z_0,Z]) = \frac{1}{2}\|Z\|^2.
\end{align*}
Consequently, the map $\Phi_{\Gamma_0^*\Omega_{G\cdot\lambda_0}}-\lambda_0:\got{p}\rightarrow\got{k}^*$ is proper, and so is $\Phi_{\Gamma_0^*\Omega_{G\cdot\lambda_0}}$.
\end{proof}

As for the second symplectic form $\Omega_{\got{p}}$ on $\got{p}$, it is a constant symplectic form on a symplectic vector space. Recall that it is defined by \eqref{eq;defn_Omegap}. Therefore, one can easily check that a moment map for $(\got{p},\Omega_{\got{p}})$ is
\[
\begin{array}{cccl}
\Phi_{\Omega_{\got{p}}}:&\got{p}&\rightarrow&\got{k}^*\\
&Z&\mapsto&\bigl(X\in\got{k}\mapsto\langle\lambda_0,[[X,Z],Z]\rangle\bigr).
\end{array}
\]

\begin{lem}
\label{lem:momentmap_Phi_Omega_gotp_is_proper}
We have $\langle\Phi_{\Omega_{\got{p}}}(Z),z_0\rangle = \|Z\|^2$, for all $Z\in\got{p}$. In particular, the moment map $\Phi_{\Omega_{\got{p}}} :\got{p}\rightarrow\got{k}^*$ is proper.
\end{lem}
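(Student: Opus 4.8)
The plan is a short direct computation, entirely parallel to the proof of Lemma~\ref{lem:momentmap_Phi_Gamma0OmegaGlambda0_is_proper}. First I would simply evaluate the given moment map formula at $z_0\in\got{k}$. Since $\lambda_0$ is by definition the image of $z_0$ under the isomorphism $\got{g}\xrightarrow{\sim}\got{g}^*$ induced by $B_{\theta}$, this yields
\[
\langle\Phi_{\Omega_{\got{p}}}(Z),z_0\rangle = \langle\lambda_0,[[z_0,Z],Z]\rangle = B_{\theta}\bigl(z_0,[[z_0,Z],Z]\bigr).
\]

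Next I would move the two brackets across $B_{\theta}$ using the (anti)symmetry properties already recalled in Section~\ref{section:preliminaries}: since $Z\in\got{p}$, $\ad(Z)$ is symmetric for $B_{\theta}$, and since $z_0\in\got{k}$, $\ad(z_0)$ is skew-symmetric for $B_{\theta}$ (both instances of \cite[Lemma 6.27]{knapp}). Writing $[[z_0,Z],Z]=-\ad(Z)\bigl([z_0,Z]\bigr)$, transferring $\ad(Z)$ onto $z_0$, and using $\ad(Z)z_0=-[z_0,Z]$, the right-hand side becomes $B_{\theta}\bigl([z_0,Z],[z_0,Z]\bigr)=\|[z_0,Z]\|^2$. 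Then, using skew-symmetry of $\ad(z_0)$ together with the defining identity $\ad(z_0)|_{\got{p}}^2=-\id_{\got{p}}$, one gets $\|[z_0,Z]\|^2=-B_{\theta}\bigl(Z,\ad(z_0)^2 Z\bigr)=B_{\theta}(Z,Z)=\|Z\|^2$. This establishes the claimed equality. The only thing to watch is the sign bookkeeping coming from antisymmetry of the bracket; there is no genuine obstacle here, the computation being even simpler than in Lemma~\ref{lem:momentmap_Phi_Gamma0OmegaGlambda0_is_proper}.

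Finally, for properness: combining the equality just obtained with the Cauchy--Schwarz inequality for the pairing $\got{k}^*\times\got{k}\to\R$, we get, for every $Z\in\got{p}$,
\[
\|\Phi_{\Omega_{\got{p}}}(Z)\|\,\|z_0\| \geq \langle\Phi_{\Omega_{\got{p}}}(Z),z_0\rangle = \|Z\|^2,
\]
hence $\|\Phi_{\Omega_{\got{p}}}(Z)\|\geq \|z_0\|^{-1}\|Z\|^2$, where $\|z_0\|$ is a fixed positive constant (note $z_0\neq 0$ since $\got{p}\neq 0$ as $G$ is noncompact). Therefore the preimage under $\Phi_{\Omega_{\got{p}}}$ of any bounded subset of $\got{k}^*$ is bounded in the finite-dimensional space $\got{p}$, and being closed by continuity of $\Phi_{\Omega_{\got{p}}}$ it is compact; thus $\Phi_{\Omega_{\got{p}}}$ is proper. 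I would also remark that this last displayed inequality is precisely the ``strong properness'' estimate (with $d=\|z_0\|^{-1}$ and $\gamma=2$) required to later apply Theorem~\ref{thm:moserthm} via Corollary~\ref{cor:moserthm_segment_and_poincarecondition}, which is the real purpose of the lemma.
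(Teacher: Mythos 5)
Your computation is correct and is essentially the paper's own proof: both evaluate $\langle\Phi_{\Omega_{\got{p}}}(Z),z_0\rangle=B_{\theta}(z_0,[[z_0,Z],Z])$, move the brackets across $B_{\theta}$ using symmetry of $\ad(Z)$ and skew-symmetry of $\ad(z_0)$ to reach $-B_{\theta}(Z,\ad(z_0)^2Z)$, and conclude with $\ad(z_0)|_{\got{p}}^2=-\id_{\got{p}}$. Your added Cauchy--Schwarz step making the properness (and the $d\|Z\|^{2}$ estimate needed later) explicit is a harmless elaboration of what the paper leaves implicit.
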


\begin{proof}
From the definition of $\Phi_{\Omega_{\got{p}}}$, we obtain the following equalities,
\[
\langle\Phi_{\Omega_{\got{p}}}(Z),z_0\rangle = B_{\theta}(z_0,[[z_0,Z],Z]) = B_{\theta}(-\ad(z_0)^2Z,Z).
\]
But, $\ad(z_0)|_{\got{p}}^2 = -\id_{\got{p}}$. Thus $\langle\Phi_{\Omega_{\got{p}}}(Z),z_0\rangle = \|Z\|^2$.
\end{proof}

We will also need the next lemma, which is an analogous of the Poincar\'e Lemma for smooth families of differential forms.

\begin{lem}
\label{lem:Poincarelemma_smoothfamilyversion}
Let $(\omega_t)_{t\in[0,1]}$ be a smooth family of closed $2$-forms on $\got{p}$. Then, there exists a smooth family $(\mu_t)_{t\in[0,1]}$ of $1$-forms, such that $\omega_t = d\mu_t$ for all $t\in[0,1]$. Moreover, if the $2$-form $\omega_t$ is $K$-invariant for $t\in[0,1]$, then we can take $\mu_t$ to be $K$-invariant.
\end{lem}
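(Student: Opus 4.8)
The plan is to prove this by the classical Poincar\'e lemma on the vector space $\got{p}$, implemented through an explicit homotopy operator and applied with $t\in[0,1]$ as an inert parameter. This is really nothing more than the operator $h_F$ introduced just before Corollary~\ref{cor:moserthm_segment_and_poincarecondition}, specialized to the degenerate case in which $M$ is a point and $V=\got{p}$. Concretely, for a $2$-form $\omega$ on $\got{p}$ I would set
\[
\mu(\omega)|_Z(A) := \int_0^1 s\,\omega|_{sZ}(Z,A)\,ds,\qquad Z,A\in\got{p},
\]
which is the homotopy operator attached to the radial contraction $(Z,s)\mapsto sZ$. A standard computation gives $d\bigl(\mu(\omega)\bigr) + \mu\bigl(d\omega\bigr) = \omega$ for every $2$-form $\omega$, the boundary contribution at $s=0$ dropping out because the constant map $Z\mapsto 0$ has zero differential (equivalently, $i^*$ kills every $2$-form when $M$ is a point, so formula~\eqref{eq:formula_PoincareLemma} reduces to $d\circ h_F+h_F\circ d=\id^*$ on $2$-forms). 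Hence, putting $\mu_t:=\mu(\omega_t)$ and using that each $\omega_t$ is closed, we obtain $d\mu_t=\omega_t$ for all $t\in[0,1]$.

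\textbf{Smoothness and invariance.} Smooth dependence on $t$ is immediate from the displayed formula: the integrand $(Z,A,s,t)\mapsto s\,\omega_t|_{sZ}(Z,A)$ is smooth jointly in all its variables since the radial contraction is smooth and $(\omega_t)_{t\in[0,1]}$ is a smooth family, and integration over the compact interval $[0,1]$ preserves joint smoothness in $(Z,t)$; thus $(\mu_t)_{t\in[0,1]}$ is a smooth family of $1$-forms. For the $K$-invariance, note that $K$ acts linearly on $\got{p}$, so the radial contraction is $K$-equivariant, and therefore $\mu(\cdot)$ commutes with pullback by any $k\in K$; consequently, if each $\omega_t$ is $K$-invariant then $k^*\mu_t=k^*\mu(\omega_t)=\mu(k^*\omega_t)=\mu(\omega_t)=\mu_t$. (This is exactly the remark recorded in the paper right after~\eqref{eq:formula_PoincareLemma}; alternatively, one could replace $\mu_t$ by its average $\int_K k^*\mu_t\,dk$ against a Haar measure, which affects neither the smoothness in $t$ nor the identity $d\mu_t=\omega_t$.)

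\textbf{Main difficulty.} I do not expect a genuine obstacle: this is just the parameter-dependent Poincar\'e lemma on a contractible space, and the only two points that deserve to be spelled out are the joint smoothness in the parameter $t$, which the explicit integral formula supplies for free, and the compatibility with the $K$-action, which follows at once from the $K$-equivariance of the radial contraction. The statement is precisely what is needed to verify hypothesis~1) of Theorem~\ref{thm:moserthm} for the symplectic paths on $\got{p}$ that are not of affine (segment) type.
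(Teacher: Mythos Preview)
Your proposal is correct and follows essentially the same approach as the paper: the paper does not give a detailed argument either, but simply refers to the standard proof of the Poincar\'e Lemma on $\R^n$ (citing \cite[4.18]{warner}) and observes that the parameter $t$ carries through smoothly and that $K$-invariance follows from the linearity of the $K$-action on $\got{p}$. Your explicit homotopy operator $\mu(\omega)|_Z(A)=\int_0^1 s\,\omega|_{sZ}(Z,A)\,ds$ is exactly the operator underlying that reference (and, as you note, the specialization of $h_F$ from~\eqref{eq:formula_PoincareLemma} to $M=\{\mathrm{pt}\}$, $V=\got{p}$), so your write-up is in fact more detailed than the paper's own treatment.
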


The proof of this lemma is almost the same as the one of the Poincar\'e Lemma for $2$-forms on $\R^n$ (see for example \cite[4.18]{warner}), by making obvious changes of notation. The parameter $t$ does not involve any change in the computations, and the result is actually a smooth family.

Moreover, if $\omega_t$ is $K$-invariant, then the linearity of the action of $K$ on $\got{p}$ induces that  $\mu_t$ is $K$-invariant. This can be checked directly on the definition of $\mu_t$ given in the proof of the Poincar\'e Lemma.

\begin{proof}[Proof of Theorem \ref{thm:mainthm_G/Kcase}]
For all $t\in[0,1]$, let $\Omega_t$ be the differential $2$-form on $\got{p}$ defined by
\[
\Omega_t|_Z := (\Gamma_0^*\Omega_{G\cdot\lambda_0})|_{tZ} \quad \forall Z\in\got{p}.
\]
In particular, $\Omega_1 = \Gamma_0^*\Omega_{G\cdot\lambda_0}$. Moreover, for $t=0$, we have the constant symplectic form
\[
\Omega_0|_Z(A,B) = \langle\lambda_0,[A,B]\rangle = B_{\theta}(z_0,[A,B]) = \Omega_{\got{p}}|_Z(A,B),
\]
for all $Z,A,B\in\got{p}$, since $\Psi_0^+ = \id_{\got{p}}$. When $t\neq 0$, one can check that
\[
\Omega_t = \frac{1}{t^2}\eta_t^*(\Gamma_0^*\Omega_{G\cdot\lambda_0}) = \frac{1}{t^2}\eta_t^*\Omega_1,
\]
where $\eta_t:\got{p}\rightarrow\got{p}$ is the homothecy $Z\mapsto tZ$, for all $t\in[0,1]$. By linearity of the action of $K$, $\eta_t$ commutes with this action. Thus $\Omega_t$ is $K$-invariant. Furthermore, since $\Omega_1$ is closed, we have $d\Omega_t = \frac{1}{t^2}d(\eta_t^*\Omega_1)=\frac{1}{t^2}\eta_t^*d\Omega_1 = 0$. But $\Omega_1$ is symplectic, so the skew-symmetric bilinear form $(\Omega_t)|_Z=(\Omega_1)|_{tZ}$ is clearly nondegenerate. We conclude that $\Omega_t$ is symplectic for all $t\in[0,1]$.

The smooth family $(\Omega_t)_{t\in[0,1]}$ of symplectic forms induces the smooth family $(\frac{d}{dt}\Omega_t)_{t\in[0,1]}$ of $K$-invariant closed $2$-forms. Indeed, $\frac{d}{dt}\Omega_t$ is closed for all $t\in[0,1]$ since the exterior derivative $d$ and the differential operator $\frac{d}{dt}$ commute. Now, from  Lemma \ref{lem:Poincarelemma_smoothfamilyversion}, there exists a smooth family $(\mu_t)_{t\in[0,1]}$ of $K$-invariant $1$-forms on $\got{p}$ such that, for all $t\in[0,1]$, we have $\frac{d}{dt}\Omega_t = d\mu_t$. This proves hypothesis 1) of Theorem \ref{thm:moserthm}.

For all $t\in]0,1]$, we define
\[
\Phi_t := \frac{1}{t^2}\eta_t^*\Phi_{\Gamma_0^*\Omega_{G\cdot\lambda_0}} - \frac{1}{t^2}\lambda_0 :\got{p}\rightarrow\got{k}^*,
\]
and, for $t=0$, we set $\Phi_0 := \Phi_{\Omega_{\got{p}}}$. The maps $\Phi_t$ are moment maps for the Hamiltonian $K$-manifolds $(\got{p}, \Omega_t)$, since $\lambda_0$ is centralized by $K$.

Note that, for all $t\in[0,1]$, we have $\Phi_t(0) = 0$. The set $\{\Phi_t(0); t\in[0,1]\}$ is thus reduced to a single point, and assertion 2) of Theorem \ref{thm:moserthm} is obviously satisfied. Moreover, the submanifold $M\times\{0\}=K\cdot\lambda_0\times\{0\}$ being identified to $\{0\}\subset\got{p}$, we have $(T_0\got{p})^{\Omega_t|_0} = \{0\} = T_0\{0\}$, that is, condition 3) is also satisfied.

It remains to prove hypothesis 4) of Theorem \ref{thm:moserthm}. A first computation gives
\[
\langle\Phi_t(Z),z_0\rangle = \frac{1}{t^2}\langle\Phi_{\Gamma_0^*\Omega_{G\cdot\lambda_0}}(tZ)-\lambda_0,z_0\rangle \geqslant \frac{1}{2t^2}\|tZ\|^2 = \frac{1}{2}\|Z\|^2,
\]
for all $Z\in\got{p}$ and all $t\in]0,1]$, using Lemma \ref{lem:momentmap_Phi_Gamma0OmegaGlambda0_is_proper}. But, by Lemma \ref{lem:momentmap_Phi_Omega_gotp_is_proper}, we also have $\langle\Phi_0(Z),z_0\rangle\geq \frac{1}{2}\|Z\|^2$ for all $Z\in\got{p}$. So $\|\Phi_t(Z)\| = \sup_{X\in\got{k}\setminus\{0\}}\frac{\langle\Phi_t(Z),X\rangle}{\|X\|}\geq\frac{1}{2\|z_0\|}\|Z\|^2$, for all $Z\in\got{p}$ and all $t\in[0,1]$. Finally assertion 4) is proved and we conclude by applying Theorem \ref{thm:moserthm} and the fact that the condition ``$\mu_t|_0(0)=0$ for all $t\in[0,1]$'' is always verified on the vector space $\got{p}\simeq K\cdot\lambda_0\times\got{p}$.
\end{proof}

\section{Proof of Theorem \ref{thm:mainthm}}
\label{section:proof_mainthm}

In this last section, we prove Theorem \ref{thm:mainthm} for any $\lambda\in\Chol$. Now, the two $K$-equivariant diffeomorphisms $\Gamma:K\cdot\lambda\times\got{p}\rightarrow G\cdot\lambda$ and $\Gamma_0:\got{p}\rightarrow G\cdot\lambda_0$ are involved, so that we will exclusively work on the manifold $K\cdot\lambda\times\got{p}$. We will consider the following symplectic forms on $K\cdot\lambda\times\got{p}$ ,
\begin{enumerate}
\item[(\emph{i})] $\Omega_{K\cdot\lambda\times\got{p}} = \Omega_{K\cdot\lambda}\oplus\Omega_{\got{p}}$;
\item[(\emph{ii})] $\Omega^1:= \Omega_{K\cdot\lambda}\oplus\Gamma_0^*\Omega_{G\cdot\lambda_0}$;
\item[(\emph{iii})] $\Omega^{\delta}:= \Omega_{K\cdot\lambda}\oplus (\delta\Gamma_0^*\Omega_{G\cdot\lambda_0})$, for all $\delta>0$;
\item[(\emph{iv})] $\Gamma^*\Omega_{G\cdot\lambda}$.
\end{enumerate}
Recall that the ``direct sum'' of two symplectic forms is defined as the canonical symplectic form on the direct product of the two underlying symplectic manifolds.

The purpose of this section is to prove that the symplectic forms $\Omega_{K\cdot\lambda\times\got{p}}$ and $\Gamma^*\Omega_{G\cdot\lambda}$ are symplectomorphic. To this end, we will use repeatedly the Moser argument given in section \ref{section:moser} in order to prove the existence of the symplectomorphisms indicated in the following diagram,
\begin{diagram}
\Omega_{K\cdot\lambda\times\got{p}} & \rTo^{\text{Theorem \ref{thm:mainthm_G/Kcase} }} & \Omega^1 & \rTo^{\text{Example \ref{exmp:vector_space_example_for_moserthm} }} & \Omega^{\delta} & \rTo^{\text{paragraph \ref{subsection:last_symplecto} }} \Gamma^*\Omega_{G\cdot\lambda}
\end{diagram}
The first symplectomorphism directly results from Theorem \ref{thm:mainthm_G/Kcase}, the second one from Example \ref{exmp:vector_space_example_for_moserthm} and  Lemma \ref{lem:momentmap_Phi_Gamma0OmegaGlambda0_is_proper}. The last arrow will be studied in the next paragraphs. Furthermore, one can assume that the diffeomorphisms given by the three arrows in the above diagram, fix each point of the submanifold $K\cdot\lambda\times\{0\}$. Composing such diffeomorphisms yields a symplectomorphism from $(K\cdot\lambda\times\got{p},\Omega_{K\cdot\lambda\times\got{p}})$ onto $(K\cdot\lambda\times\got{p},\Gamma^*\Omega_{G\cdot\lambda})$, which satisfies the statement of Theorem \ref{thm:mainthm}.

\subsection{Symplectomorphism between $\Omega^{\delta}$ and $\Gamma^*\Omega_{G\cdot\lambda}$ on $K\cdot\lambda\times\got{p}$}
\label{subsection:last_symplecto}

Then, the proof of Theorem \ref{thm:mainthm} will be completed by proving the next statement.

\begin{thm}
\label{thm:symplecto_Gamma*OmegaGlamnbda_and_Omegadelta}
For all $\delta> b_{\lambda}:=\sup_{\|u\|=1,\|v\|=1}\langle\lambda,[u,v]\rangle$, there exists a $K$-equivariant symplectomorphism from $(K\cdot\lambda\times\got{p},\Gamma^*\Omega_{G\cdot\lambda})$ onto $(K\cdot\lambda\times\got{p},\Omega^{\delta})$, which fixes each point $(k\lambda,0)$, for all $k\in K$.
\end{thm}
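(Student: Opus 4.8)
The plan is to deduce the statement from Corollary~\ref{cor:moserthm_segment_and_poincarecondition}, applied with $M=K\cdot\lambda$, $V=\got{p}$, $\Omega_0=\Gamma^*\Omega_{G\cdot\lambda}$, $\Omega_1=\Omega^{\delta}$, and the linear segment $\Omega_t=(1-t)\,\Gamma^*\Omega_{G\cdot\lambda}+t\,\Omega^{\delta}$. The relevant moment maps are $\phi_0:(k\lambda,Z)\mapsto (e^Z k\lambda)|_{\got{k}}$ (the composition of $\Gamma$ with the orbit projection $G\cdot\lambda\to\got{k}^*$) and $\phi_1:(k\lambda,Z)\mapsto\bigl(X\mapsto\langle k\lambda,X\rangle+\delta\langle e^Z\lambda_0,X\rangle\bigr)$ (the sum of the inclusion $K\cdot\lambda\hookrightarrow\got{k}^*$ and $\delta$ times the moment map of Lemma~\ref{lem:momentmap_Phi_Gamma0OmegaGlambda0_is_proper}), and I set $\phi_t=(1-t)\phi_0+t\phi_1$. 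I work throughout in the coordinates $([k,X],A)\in\got{k}/\got{k}_{\lambda}\oplus\got{p}$ of Lemma~\ref{lem:derivative_Gamma}, using formula~\eqref{eq:formula_GammaOmegaGlambda} for $\Gamma^*\Omega_{G\cdot\lambda}$ and, for $\Omega^{\delta}$, the expression $\Omega^{\delta}|_{(k\lambda,Z)}(([k,X],A),([k,Y],B))=\langle\lambda,[X,Y]\rangle+\delta\langle\lambda_0,[\Psi_Z^+ A,\Psi_Z^+ B]\rangle$; I also write $\lambda^{\sharp}\in\got{t}$ for the element identified with $\lambda$ via $B_{\theta}$. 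It then remains to check hypotheses a)--d) of Corollary~\ref{cor:moserthm_segment_and_poincarecondition}.

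Hypotheses b) and c) are easy. Setting $Z=0$ kills $\Psi_0^-$ and reduces $\Psi_0^+$ to the identity, so both $\Omega_0$ and $\Omega_1$ restrict to $\langle\lambda,[X,Y]\rangle=\Omega_{K\cdot\lambda}$ on $K\cdot\lambda\times\{0\}$, giving b); and $\Omega_t|_{(k\lambda,0)}$ is then block diagonal for the splitting $\got{k}/\got{k}_{\lambda}\oplus\got{p}$, its $\got{p}$-block being $\pm B_{\theta}\bigl(\ad(\Ad(k)((1-t)\lambda^{\sharp}+t\delta z_0))\,\cdot\,,\,\cdot\,\bigr)$, which is nondegenerate because the element $(1-t)\lambda^{\sharp}+t\delta z_0$ is noncompact-regular for every $t\in[0,1]$ (each noncompact root takes on it a value $\pm[(1-t)(\beta,\lambda)+t\delta]\neq 0$, using $\lambda\in\Chol$ and $\delta>0$); hence $(\{k\lambda\}\times\got{p})^{\Omega_t}=K\cdot\lambda\times\{0\}$, which is c).

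The main obstacle is hypothesis a), nondegeneracy of $\Omega_t$ at an arbitrary point $(k\lambda,Z)$; this is where $\delta>b_{\lambda}$ is used, together with the eigenvalue bound of Lemma~\ref{lem:chi_Z_and_eigenvalues}. Since $\Psi_Z^+|_{\got{p}}$ is invertible, I test nondegeneracy after the linear change of fiber variables $A\mapsto A':=\Psi_Z^+(A)$, which replaces $\Psi_Z^-(A)$ by $\chi_Z(A')$ (definition~\eqref{eq:defn_chiZ}). Pairing a kernel vector $([k,X],A')$ of $\Omega_t|_{(k\lambda,Z)}$ against the directions $([k,Y],0)$, and using nondegeneracy of $\langle\lambda,[\,\cdot\,,\,\cdot\,]\rangle$ on $\got{k}/\got{k}_{\lambda}$, forces $X\equiv-(1-t)\Ad(k^{-1})\chi_Z(A')\bmod\got{k}_{\lambda}$; substituting this back, $A'$ must be annihilated by the skew form on $\got{p}$
\[
Q_t(A',B')=t(1-t)\,\langle k\lambda,[\chi_Z A',\chi_Z B']\rangle+(1-t)\,\langle k\lambda,[A',B']\rangle+t\delta\,\langle\lambda_0,[A',B']\rangle .
\]
Testing $Q_t$ against $B'=\ad(z_0)A'$ and invoking: (i) $\langle\lambda_0,[A',\ad(z_0)A']\rangle=-\|A'\|^2$ by~\eqref{eq;defn_Omegap}; (ii) $\langle k\lambda,[A',\ad(z_0)A']\rangle=B_{\theta}\bigl(A',\ad(\Ad(k)\lambda^{\sharp})\ad(z_0)A'\bigr)\leqslant-c_{\lambda}\|A'\|^2$, where $c_{\lambda}:=\min_{\beta\in\got{R}_n^{+,z_0}}(\beta,\lambda)>0$ is positive precisely because $\lambda\in\Chol$ (the operator $\ad(z_0)\ad(\lambda^{\sharp})$ being negative definite on $\got{p}$, with largest eigenvalue $-c_{\lambda}$); and (iii) $|\langle k\lambda,[\chi_Z A',\chi_Z B']\rangle|\leqslant b_{\lambda}\,\|\chi_Z\|_{\mathrm{op}}^2\,\|A'\|\,\|B'\|$ with $\|\chi_Z\|_{\mathrm{op}}<1$ by Lemma~\ref{lem:chi_Z_and_eigenvalues} and the $K$-invariance of $\|\cdot\|$, one gets
\[
0=Q_t(A',\ad(z_0)A')\leqslant\bigl[(1-t)(t b_{\lambda}-c_{\lambda})-t\delta\bigr]\|A'\|^2\leqslant t\bigl[(1-t)b_{\lambda}-\delta\bigr]\|A'\|^2 .
\]
Since $(1-t)b_{\lambda}\leqslant b_{\lambda}<\delta$ (and the case $t=0$ is immediate from (ii)), this forces $A'=0$, so $\Omega_t$ is nondegenerate for all $t$. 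I expect this estimate to be the real content: the hypothesis $\delta>b_{\lambda}$ is exactly what is needed to dominate, uniformly in $t$ and in $Z$, the ``$\chi_Z$-correction'' that Lemma~\ref{lem:chi_Z_and_eigenvalues} keeps of norm $<1$.

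For hypothesis d) one needs properness-type lower bounds on the moment maps. Lemma~\ref{lem:momentmap_Phi_Gamma0OmegaGlambda0_is_proper} gives directly $\langle\phi_1(k\lambda,Z),z_0\rangle\geqslant\langle\lambda,z_0\rangle+\tfrac{\delta}{2}\|Z\|^2$, and for $\phi_0$ one wants the analogue of that lemma for a general holomorphic orbit, namely $\langle e^Z k\lambda,z_0\rangle\geqslant\langle\lambda,z_0\rangle+c\,\|Z\|^2$ for some $c>0$: writing $\langle e^Z k\lambda,z_0\rangle=\langle\lambda,\cosh(\ad W)z_0\rangle$ with $W=\Ad(k^{-1})Z$ (so $\|W\|=\|Z\|$) and expanding, the leading term equals $\tfrac12 B_{\theta}(\ad(\lambda^{\sharp})W,\ad(z_0)W)\geqslant\tfrac{c_{\lambda}}{2}\|Z\|^2$ by the same holomorphic-chamber positivity, and one checks --- as in the proof of Lemma~\ref{lem:momentmap_Phi_Gamma0OmegaGlambda0_is_proper} --- that the remaining terms do not destroy the quadratic lower bound. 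Since $\langle\lambda,z_0\rangle=(\lambda,\lambda_0)\geqslant0$ (inner product of dominant weights), combining the two bounds gives $\|\phi_t(k\lambda,Z)\|\geqslant\|z_0\|^{-1}\langle\phi_t(k\lambda,Z),z_0\rangle\geqslant d\,\|Z\|^2$ with $d=\|z_0\|^{-1}\min(c,\tfrac{\delta}{2})>0$, so d) holds with $\gamma=2$. Corollary~\ref{cor:moserthm_segment_and_poincarecondition} then yields the desired $K$-equivariant symplectomorphism from $(K\cdot\lambda\times\got{p},\Gamma^*\Omega_{G\cdot\lambda})$ onto $(K\cdot\lambda\times\got{p},\Omega^{\delta})$, fixing $K\cdot\lambda\times\{0\}$ pointwise, which also completes the proof of Theorem~\ref{thm:mainthm}.
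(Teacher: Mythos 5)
Your verification of hypotheses a), b), c) is correct, and for a) your direct kernel computation (change of fiber variable $A\mapsto A'=\Psi_Z^+(A)$, elimination of $X$ via nondegeneracy of $\langle\lambda,[\cdot,\cdot]\rangle$ on $\got{k}/\got{k}_{\lambda}$, then testing the residual form $Q_t$ against $B'=\ad(z_0)A'$ with the bounds $\|\chi_Z\|<1$, $b_{\lambda}$, and the holomorphic-chamber positivity) is in substance exactly the estimate the paper runs through Lemma \ref{lem:necessarycondition_if_linearforms_opposed} and Theorem \ref{thm:segment_is_symplectic}; your packaging as a kernel argument is if anything more direct than the paper's contraposition via proportional linear forms.

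The gap is in hypothesis d), in the lower bound for $\phi_0=\Phi_{\Gamma^*\Omega_{G\cdot\lambda}}$. You pair with the fixed vector $z_0$ and claim $\langle e^Zk\lambda-\lambda,z_0\rangle\geq c\|Z\|^2$ ``as in the proof of Lemma \ref{lem:momentmap_Phi_Gamma0OmegaGlambda0_is_proper}''. But the analogy breaks precisely at the point that made that lemma trivial: there the series terms are $\tfrac{1}{(2k)!}B_{\theta}(\ad(Z)^kz_0,\ad(Z)^kz_0)$, manifest squares, whereas here they are the cross terms $\tfrac{1}{(2n)!}B_{\theta}\bigl(\ad(W)^nH_{\lambda},\ad(W)^nz_0\bigr)$ with $W=\Ad(k^{-1})Z$, and for $n\geq 2$ their nonnegativity is not covered by anything you cite: Lemma \ref{lem:positivity_powers_ad(Z)_forHlambda} only treats $\ad(Z)^2$, and a positive semidefinite operator such as $\ad(W)^{2n}$ evaluated on two \emph{different} vectors $H_{\lambda}$, $z_0$ has no a priori sign. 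The statement you need is in fact true (one can prove $\langle\lambda,\ad(W)^{2n}z_0\rangle\geq 0$ by conjugating $W$ into a maximal flat spanned by strongly orthogonal noncompact root vectors, observing that $\ad(W_0)^{2n}z_0$ is then a nonnegative combination of the coroot directions $h_{\gamma_j}$, and using that the compact Weyl group preserves $\got{R}_n^{+}$ together with Kostant's convexity theorem to control $B_{\theta}(\Ad(k^{-1})H_{\lambda},h_{\gamma_j})$), but as written ``one checks that the remaining terms do not destroy the quadratic lower bound'' is an unproved and genuinely nontrivial assertion, so your proof of d) is incomplete. The paper avoids this issue entirely by a different pairing: it tests the combined moment map $\phi_t$ against the $t$-dependent element $H_{\lambda_t}=tH_{\lambda}+(1-t)\delta z_0$, so that the series collapses to $\sum_n\tfrac{1}{(2n)!}\|\ad(Z)^nH_{\lambda_t}\|^2$ plus the nonnegative constant $(1-t)\langle\lambda,H_{\lambda_t}\rangle$, and then the $n=1$ term is bounded below by \eqref{eq:minoration_Btheta(HadZ2H)}; adopting that pairing (or supplying the positivity argument above) closes your gap, after which Corollary \ref{cor:moserthm_segment_and_poincarecondition} applies as you say.
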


We will apply Corollary \ref{cor:moserthm_segment_and_poincarecondition} again. The main difficulty lies in proving that every $2$-form of the segment connecting the symplectic forms $\Omega^{\delta}$ and $\Gamma^*\Omega_{G\cdot\lambda}$, is symplectic too. According to the statement of the next theorem, this is possible for $\delta$ large enough.

\begin{thm}
\label{thm:segment_is_symplectic}
If $\delta>b_{\lambda}:=\sup_{\|u\|=1,\|v\|=1}\langle\lambda,[u,v]\rangle$, then, for all $t\in[0,1]$, the $2$-form $\Omega_t^{\delta} := t\Omega^{\delta}+(1-t)\Gamma^*\Omega_{G\cdot\lambda}$ is symplectic.
\end{thm}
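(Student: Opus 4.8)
The plan is to show that the skew-symmetric bilinear form $\Omega_t^\delta|_{(k\lambda,Z)}$ is nondegenerate for every $(k\lambda,Z)\in K\cdot\lambda\times\got{p}$ and every $t\in[0,1]$; closedness is automatic since both $\Omega^\delta$ and $\Gamma^*\Omega_{G\cdot\lambda}$ are closed and the segment of closed forms is closed. Using the explicit formulas \eqref{eq:formula_GammaOmegaGlambda} for $\Gamma^*\Omega_{G\cdot\lambda}$ and the analogue for $\Omega^\delta$ (obtained from \eqref{eq:formula_OmegaKlambdaxp}, replacing $B_\theta(A,[z_0,B])$ by $\delta\langle\lambda_0,[\Psi_Z^+A,\Psi_Z^+B]\rangle=\delta B_\theta(\Psi_Z^+A,[z_0,\Psi_Z^+B])$), I would write the convex combination explicitly at a point $(k\lambda,Z)$. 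By $K$-equivariance (both forms are $K$-invariant and the action is transitive on $K\cdot\lambda$) it suffices to treat points of the form $(\lambda,Z)$ with $Z\in\got{p}$; and by replacing $Z$ with its $K_\lambda$-conjugate one may further normalize $Z$, though that is not essential. The key structural observation is that in both formulas the ``$\got{k}/\got{k}_\lambda$ part'' enters only through $X+\Ad(k^{-1})\Psi_Z^-(A)$ (in the general form) or through $X$ alone (in $\Omega^\delta$), while the genuinely quadratic-in-$\got{p}$ piece is $\langle k\lambda,[\Psi_Z^+A,\Psi_Z^+B]\rangle$ versus $\delta\langle k\lambda,[\Psi_Z^+A,\Psi_Z^+B]\rangle$.

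The main step is to reduce nondegeneracy to the nondegeneracy of the $\got{p}$-block after eliminating the $\got{k}/\got{k}_\lambda$-directions. Concretely: suppose $(([k,X],A))$ lies in the kernel of $\Omega_t^\delta|_{(k\lambda,Z)}$. Pairing against vectors of the form $([k,Y],0)$ with $Y\in\got{k}/\got{k}_\lambda$ and using that $\langle\lambda,[\cdot,\cdot]\rangle$ is nondegenerate on $\got{k}/\got{k}_\lambda$, one solves for $X$ in terms of $A$ (a ``Schur complement'' step): $X$ is forced to equal an explicit linear expression $-\Ad(k^{-1})\bigl(t\cdot 0+(1-t)\Psi_Z^-(A)\bigr)$ modulo $\got{k}_\lambda$, i.e. $X=-(1-t)\Ad(k^{-1})\Psi_Z^-(A)\bmod\got{k}_\lambda$. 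Substituting this back into the pairing against $([k,0],B)$, $B\in\got{p}$, the cross-terms cancel by design (this is exactly why the formulas were split using $\Psi_Z^\pm$), and one is left with a form on $\got{p}$ of the shape
\[
\mathcal{Q}_t(A,B)=\bigl\langle\lambda,\bigl[(1-t)\Psi_Z^-A,\,(1-t)\Psi_Z^-B\bigr]\bigr\rangle+\bigl\langle\lambda,[\Psi_Z^+A,\Psi_Z^+B]\bigr\rangle\cdot\bigl(t\delta+(1-t)\bigr)+\cdots
\]
(the precise bookkeeping of the $t$-weights on the two summands of \eqref{eq:formula_GammaOmegaGlambda} must be done carefully). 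Rewriting everything through the positive-definite symmetric operator $\Psi_Z^+$ by the substitution $A'=\Psi_Z^+A$ and recalling $\chi_Z=\Psi_Z^-(\Psi_Z^+)^{-1}$ from \eqref{eq:defn_chiZ}, the quadratic form becomes, up to the positive factor coming from $\Psi_Z^+$,
\[
(A',B')\longmapsto \bigl\langle\lambda,\bigl[\bigl((1-t)^2\chi_Z^{\mathrm{something}}+\cdots\bigr)A',B'\bigr]\bigr\rangle,
\]
and the point is that the operator sandwiched inside is of the form $c(t,\delta)\cdot\id+(\text{operator built from }\chi_Z)$ with $c(t,\delta)=t\delta+(1-t)\ge\min(1,\delta)>0$ when $\delta>0$; Lemma \ref{lem:chi_Z_and_eigenvalues} gives $\|\chi_Z\|<1$.

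The hard part will be the final nondegeneracy estimate: after the reduction one must show $\mathcal{Q}_t(A,A')\ne 0$ for $A'\ne 0$, i.e. that the self-adjoint operator $R_t:\got{p}\to\got{p}$ defined by $\langle\lambda,[R_tA',B']\rangle=\mathcal{Q}_t(A',B')$ has no zero eigenvalue. Here is where the hypothesis $\delta>b_\lambda$ enters: one estimates $|\langle\lambda,[u,v]\rangle|\le b_\lambda\|u\|\|v\|$ for $u,v\in\got{p}$ (this is the definition of $b_\lambda$), so the ``$\Psi_Z^-$-term'' contributes a perturbation of operator norm at most (roughly) $(1-t)^2 b_\lambda\|\chi_Z\|^2<(1-t)^2 b_\lambda$ relative to the appropriate norm, while the ``diagonal'' part $t\delta+(1-t)$ times the symplectic pairing $\langle\lambda,[\cdot,\cdot]\rangle$ on $\got{p}$ (whose associated operator is $\ad(z_0)|_\got{p}$, an isometry!) dominates it precisely when $t\delta+(1-t)>(1-t)b_\lambda$; since $\delta>b_\lambda$ implies $t\delta+(1-t)\ge (1-t)+t b_\lambda \ge \min(1,b_\lambda)\cdots$ — the inequality $t\delta+(1-t) > (1-t)b_\lambda$ holds for all $t\in[0,1]$ exactly because $\delta>b_\lambda$ and $(1-t)b_\lambda < (1-t)\delta \le t\delta+(1-t)$. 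Thus $R_t$ is a small self-adjoint perturbation of $(t\delta+(1-t))\ad(z_0)|_\got{p}$ and hence invertible, giving nondegeneracy. I expect the genuine labor to be (a) getting the $t$-weights in the Schur-complement substitution exactly right — the two terms of \eqref{eq:formula_GammaOmegaGlambda} scale differently under $t\mapsto$ the convex combination, since only the second term is multiplied by $\delta$ in $\Omega^\delta$ while in $\Gamma^*\Omega_{G\cdot\lambda}$ it has coefficient $1$ — and (b) making the norm comparison between the $\chi_Z$-perturbation and the $\ad(z_0)$-leading term fully rigorous with respect to a single fixed inner product on $\got{p}$ (most naturally $B_\theta$), using that $\ad(z_0)|_\got{p}$ is $B_\theta$-orthogonal.
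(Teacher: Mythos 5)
Your reduction is sound as far as it goes: the $\got{k}/\got{k}_{\lambda}$-block of $\Omega_t^{\delta}$ is $\langle\lambda,[X,Y]\rangle$ (coefficient $t+(1-t)=1$), so a kernel vector must satisfy $X=-(1-t)\Ad(k^{-1})\Psi_Z^-(A)\bmod\got{k}_{\lambda}$, and after the substitution $A'=\Psi_Z^+(A)$ the resulting Schur complement on $\got{p}$ is $\mathcal{Q}_t(A',B')=t(1-t)\langle\lambda,[\chi_Z A',\chi_Z B']\rangle+t\delta\langle\lambda_0,[A',B']\rangle+(1-t)\langle\lambda,[A',B']\rangle$; note the weight on the $\chi_Z$-term is $t(1-t)$, not $(1-t)^2$. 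The genuine gap is in your final estimate. The ``diagonal'' part is not $(t\delta+(1-t))$ times a single isometric pairing: its $B_{\theta}$-associated operator is $t\delta\,\ad(z_0)|_{\got{p}}+(1-t)\ad(H_{\lambda})|_{\got{p}}$, and the operator attached to $\langle\lambda,[\cdot,\cdot]\rangle$ on $\got{p}$ is $\ad(H_{\lambda})|_{\got{p}}$, which is not an isometry. To bound this sum from below you must use that the two commuting skew operators have aligned spectra, i.e.\ $\beta(z_0)=1>0$ and $\beta(H_{\lambda})>0$ for every positive noncompact root $\beta$ --- this is exactly where $\lambda\in\Chol$ enters (the paper isolates it as Lemma \ref{lem:positivity_powers_ad(Z)_forHlambda}); without it the two diagonal contributions could cancel. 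Moreover your numerical chain is false: ``$(1-t)\delta\leq t\delta+(1-t)$'' fails for small $t$ as soon as $\delta>1$, and the inequality you aim for, $t\delta+(1-t)>(1-t)b_{\lambda}$, already fails at $t=0$ whenever $b_{\lambda}\geq 1$; so the comparison, as written, does not close.

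With the corrected weight and the alignment observation the argument can be repaired: on each root space $\got{g}_{\beta}\subset\got{p}_{\C}$ the diagonal operator acts by $i\bigl(t\delta+(1-t)\beta(H_{\lambda})\bigr)$, so its smallest singular value is at least $t\delta+(1-t)m_{\lambda}\geq t\delta$, while the perturbation has norm at most $t(1-t)\,b_{\lambda}\|\chi_Z\|^2< t\,b_{\lambda}< t\delta$ for $t\in\,]0,1[$ (Lemma \ref{lem:chi_Z_and_eigenvalues} gives $\|\chi_Z\|<1$; the endpoints $t=0,1$ are trivial), which yields nondegeneracy. That would be a legitimate alternative to the paper's proof, which never forms the Schur complement: it shows (Lemma \ref{lem:necessarycondition_if_linearforms_opposed}) that for $\delta>b_{\lambda}$ the contractions $\imath\bigl((X,A)\bigr)\omega_1|_Z$ and $\imath\bigl((X,A)\bigr)\omega_0^{\delta}|_Z$ are never negatively proportional --- using the test vector $B=-[z_0,A]$ together with Lemmas \ref{lem:chi_Z_and_eigenvalues} and \ref{lem:positivity_powers_ad(Z)_forHlambda} --- and then deduces a common direction in which both endpoint forms are positive, so every convex combination is nondegenerate. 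The two ingredients you would still need to supply ($\|\chi_Z\|<1$ and the positivity of $\beta(H_{\lambda})$) are precisely the ones the paper's argument runs on.
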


This result will be proved in paragraph \ref{subsection:proof_of_thm_segment_is_symplectic}. We first need a lemma, which will be useful in the proofs of Theorems \ref{thm:symplecto_Gamma*OmegaGlamnbda_and_Omegadelta} and \ref{thm:segment_is_symplectic}.

We begin by setting some notations. 
For all $\lambda\in\got{t}^*$, let $H_{\lambda}$ be the unique element of $\got{t}$ such that
\[
B_{\theta}(H_{\lambda},X) = \langle\lambda,X\rangle \quad \forall X\in\got{g}.
\]
For any noncompact positive root $\beta$, we fix two nonzero vectors $E_{\beta}\in\got{g}_{\beta}$ and $E_{-\beta}\in\got{g}_{-\beta}$ such that $E_{-\beta} = \overline{E_{\beta}}$. Then, $E_{\beta}+E_{-\beta}$ and $i(E_{\beta}-E_{-\beta})$ are in $\got{p}$ (that is, they are real vectors). Moreover, the family $\bigl(E_{\beta}+E_{-\beta}, i(E_{\beta}-E_{-\beta})\bigr)_{\beta\in\got{R}_n^+}$ is a $\R$-basis of $\got{p}$, and it is well-known that this basis of $\got{p}$ is orthogonal for the inner product $B_{\theta}$. Moreover, we can choose $E_{\beta}$ and $E_{-\beta}$ such that
\begin{equation}
\label{eq:nomr_of_basisvectors_Ebeta}
B_{\theta}(E_{\beta}+E_{-\beta},E_{\beta}+E_{-\beta}) =  B_{\theta}\bigl(i(E_{\beta}-E_{-\beta}),i(E_{\beta}-E_{-\beta})\bigr)= 2,
\end{equation}
see \cite{knapp,helgason,bordemann}.


\begin{lem}
\label{lem:positivity_powers_ad(Z)_forHlambda}
Let $\lambda,\lambda'$ be in $\Chol$
. Then, for all $Z\in\got{p}$, we have
\begin{equation}
\label{eq:minoration_Btheta(HadZ2H')}
B_{\theta}(H_{\lambda},\ad(Z)^2 H_{\lambda'}) \geq \Bigl(\min_{\beta\in\got{R}_n^+}\beta(H_{\lambda})\beta(H_{\lambda'})\Bigr)\|Z\|^2.
\end{equation}
In particular, if we set $m_{\lambda}=\min_{\beta\in\got{R}_n^+}\beta(H_{\lambda})$, then
\begin{equation}
\label{eq:minoration_Btheta(z0adZ2H)}
B_{\theta}(z_0,\ad(Z)^2 H_{\lambda}) \geq m_{\lambda}\|Z\|^2,
\end{equation}
and
\begin{equation}
\label{eq:minoration_Btheta(HadZ2H)}
B_{\theta}(H_{\lambda},\ad(Z)^2 H_{\lambda}) \geq m_{\lambda}^2\|Z\|^2,
\end{equation}
for all $Z\in\got{p}$.
\end{lem}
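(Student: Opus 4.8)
The plan is to turn the left-hand side of \eqref{eq:minoration_Btheta(HadZ2H')} into an explicit positive quadratic form in the coordinates of $Z$ relative to the Harish-Chandra basis of $\got{p}$, and then bound it coefficient-by-coefficient.

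First I would symmetrize. Since $Z\in\got{p}$, the endomorphism $\ad(Z)$ of $\got{g}$ is $B_{\theta}$-symmetric, so
\[
B_{\theta}(H_{\lambda},\ad(Z)^2H_{\lambda'}) = B_{\theta}\bigl(\ad(Z)H_{\lambda},\ad(Z)H_{\lambda'}\bigr) = B_{\theta}\bigl([Z,H_{\lambda}],[Z,H_{\lambda'}]\bigr),
\]
so it suffices to compute the two vectors $[Z,H_{\lambda}]$ and $[Z,H_{\lambda'}]$ and pair them. Next I would record the action of $\ad(H)$, $H\in\got{t}$, on the basis vectors $u_{\beta}:=E_{\beta}+E_{-\beta}$ and $v_{\beta}:=i(E_{\beta}-E_{-\beta})$: from $[H,E_{\pm\beta}]=\pm i\beta(H)E_{\pm\beta}$ one gets $\ad(H)u_{\beta}=\beta(H)v_{\beta}$ and $\ad(H)v_{\beta}=-\beta(H)u_{\beta}$, i.e. $\ad(H)$ acts on each plane $\R u_{\beta}\oplus\R v_{\beta}$ as $\beta(H)$ times the standard rotation. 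Writing $Z=\sum_{\beta\in\got{R}_n^+}(a_{\beta}u_{\beta}+b_{\beta}v_{\beta})$ then gives $[Z,H_{\mu}]=-\ad(H_{\mu})Z=\sum_{\beta}\beta(H_{\mu})(b_{\beta}u_{\beta}-a_{\beta}v_{\beta})$ for $\mu=\lambda,\lambda'$.

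Plugging this into the displayed identity and using that the basis $(u_{\beta},v_{\beta})_{\beta\in\got{R}_n^+}$ is $B_{\theta}$-orthogonal with $B_{\theta}(u_{\beta},u_{\beta})=B_{\theta}(v_{\beta},v_{\beta})=2$ by \eqref{eq:nomr_of_basisvectors_Ebeta}, all cross terms vanish and one is left with
\[
B_{\theta}(H_{\lambda},\ad(Z)^2H_{\lambda'}) = 2\!\!\sum_{\beta\in\got{R}_n^+}\!\!\beta(H_{\lambda})\,\beta(H_{\lambda'})\,(a_{\beta}^2+b_{\beta}^2), \qquad \|Z\|^2 = 2\!\!\sum_{\beta\in\got{R}_n^+}\!\!(a_{\beta}^2+b_{\beta}^2).
\]
Since $\lambda,\lambda'\in\Chol$ we have $\beta(H_{\lambda})=(\beta,\lambda)>0$ and $\beta(H_{\lambda'})=(\beta,\lambda')>0$ for every $\beta\in\got{R}_n^+$, hence each product $\beta(H_{\lambda})\beta(H_{\lambda'})$ is positive; bounding it below by its minimum over the finite set $\got{R}_n^+$ gives \eqref{eq:minoration_Btheta(HadZ2H')}. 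The two ``in particular'' inequalities are then just specializations: take $\lambda'=\lambda_0$, so $H_{\lambda'}=z_0$ and $\beta(z_0)=1$, for \eqref{eq:minoration_Btheta(z0adZ2H)}; and take $\lambda'=\lambda$ for \eqref{eq:minoration_Btheta(HadZ2H)}, using that all $\beta(H_{\lambda})$ are positive so $\min_{\beta}\beta(H_{\lambda})^2=(\min_{\beta}\beta(H_{\lambda}))^2=m_{\lambda}^2$.

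I do not expect a genuinely hard step: the argument is a direct computation once $\ad(Z)^2$ is moved onto the two arguments by $B_{\theta}$-symmetry and the Harish-Chandra basis is used. The only points needing care are the $B_{\theta}$-orthogonality and the normalization \eqref{eq:nomr_of_basisvectors_Ebeta} of the basis $(u_{\beta},v_{\beta})_{\beta}$, which are what make the quadratic form already diagonal, and the remark that membership in $\Chol$ is exactly the positivity $(\beta,\cdot)>0$ needed to bound the coefficients from below.
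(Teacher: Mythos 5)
Your proof is correct and follows essentially the same route as the paper's: move one $\ad(Z)$ across via $B_{\theta}$-symmetry, expand $Z$ in the orthogonal basis $\bigl(E_{\beta}+E_{-\beta},\,i(E_{\beta}-E_{-\beta})\bigr)_{\beta\in\got{R}_n^+}$ normalized as in \eqref{eq:nomr_of_basisvectors_Ebeta}, diagonalize the resulting quadratic form, and bound the coefficients using $\beta(H_{\lambda}),\beta(H_{\lambda'})>0$ for $\lambda,\lambda'\in\Chol$, then specialize $\lambda'=\lambda_0$ and $\lambda'=\lambda$. No gaps.
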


\begin{proof}
Let $Z$ be in $\got{p}$. Then, 
\[
Z = \sum_{\beta\in\got{R}_n^+}\left(x_{\beta}^+(E_{\beta}+E_{-\beta})+x_{\beta}^-i(E_{\beta}-E_{-\beta})\right),
\]
with $x_{\beta}^{\pm}\in\R$ for all $\beta\in\got{R}_n^+$. Note that \eqref{eq:nomr_of_basisvectors_Ebeta} implies
\begin{equation}
\label{eq:Btheta(Z,Z)}
B_{\theta}(Z,Z) = \sum_{\beta\in\got{R}_n^+}2 \left((x_{\beta}^-)^2+ (x_{\beta}^+)^2\right).
\end{equation}

Let $H\in\got{t}$. Since $E_{\pm\beta}$ is in $\got{g}_{\pm\beta}$, we have $[H,E_{\pm\beta}] = \pm i\beta(H)E_{\pm\beta}$. Thus, we deduce the two equalities
\[
[H,E_{\beta} + E_{-\beta}] = \beta(H)\bigl(i(E_{\beta}-E_{-\beta})\bigr)
\]
and
\[
[H,i(E_{\beta}-E_{-\beta})] = -\beta(H)(E_{\beta} + E_{-\beta}).
\]
Consequently,
\[
[H,Z] = \sum_{\beta\in\got{R}_n^+}\beta(H)\left(-x_{\beta}^-(E_{\beta}+E_{-\beta})+x_{\beta}^+i(E_{\beta}-E_{-\beta})\right),
\]
for all $H\in\got{t}$.

Now, let $\lambda,\lambda'\in\Chol$, and denote by $H_{\lambda}, H_{\lambda'}\in\got{t}$ the respective dual elements of $\lambda$ and $\lambda'$ defined by the inner product $B_{\theta}$. These elements of $\got{t}$ necessarily verify $\beta(H_{\lambda})>0$ and $\beta(H_{\lambda'})>0$ for all $\beta\in\got{R}_n^+$. We have
\begin{align*}
B_{\theta}(H_{\lambda},\ad(Z)^2 H_{\lambda'}) & = B_{\theta}([H_{\lambda},Z],[H_{\lambda'},Z]) \\
& = \sum_{\beta\in\got{R}_n^+}2\beta(H_{\lambda})\beta(H_{\lambda'})\left((x_{\beta}^-)^2+ (x_{\beta}^+)^2\right).
\end{align*}
But $\beta(H_{\lambda})\beta(H_{\lambda'})$ is positive for all $\beta\in\got{R}_n^+$, thus one can obtain
\[
B_{\theta}(H_{\lambda},\ad(Z)^2 H_{\lambda'})\geq \Bigl(\min_{\beta\in\got{R}_n^+}\beta(H_{\lambda})\beta(H_{\lambda'})\Bigr)B_{\theta}(Z,Z),
\]
by \eqref{eq:Btheta(Z,Z)}. This proves equation \eqref{eq:minoration_Btheta(HadZ2H')}.

If we take $\lambda' = \lambda_0$, then $H_{\lambda'} = H_{\lambda_0} = z_0$. Since $\beta(z_0) = 1$ for all $\beta\in\got{R}_n^+$, we must have $\min_{\beta\in\got{R}_n^+}\beta(H_{\lambda})\beta(z_0) = m_{\lambda}$, and equation \eqref{eq:minoration_Btheta(z0adZ2H)} is clear. And finally, equation \eqref{eq:minoration_Btheta(HadZ2H)} is induced by the equality $\min_{\beta\in\got{R}_n^+}\left(\beta(H_{\lambda})^2\right) = m_{\lambda}^2$, which is true because of the positivity of the numbers $\beta(H_{\lambda})$.
\end{proof}

\begin{proof}[Proof of Theorem \ref{thm:symplecto_Gamma*OmegaGlamnbda_and_Omegadelta}]
We want to apply Corollary \ref{cor:moserthm_segment_and_poincarecondition}, so we first have to check hypotheses a) to d) of that statement.

By Theorem \ref{thm:segment_is_symplectic}, the condition $\delta> b_{\lambda}$ implies that all the elements of the family $(\Omega_t^{\delta})_{t\in[0,1]}$ are symplectic forms on $K\cdot\lambda\times\got{p}$. Hence, assertion a) is satisfied.

Secondly, from formula \eqref{eq:formula_GammaOmegaGlambda} and the definition of $\Omega^{\delta}$, one can easily show that the $2$-form $\Gamma^*\Omega_{G\cdot\lambda} - \Omega^{\delta}$ is in the kernel of $i^*$. Then, hypothesis 2) is verified.

Now consider the expression of $\Omega_t^{\delta}|_{(k\lambda,0)}$, for any $k\in K$. Since $\Psi_0(A) = \Psi_0^+(A) = A$ for all $A\in\got{p}$, we have
\[
\Omega_t^{\delta}|_{(k\lambda,0)}\bigl(([k,X],A),([k,Y],B)\bigr) = \langle\lambda,[X,Y]\rangle + \langle t\delta\lambda_0+(1-t)\lambda,[A,B]\rangle
\]
for all $X,Y\in\got{k}/\got{k}_{\lambda}$ and all $A,B\in\got{p}$. Thus, we clearly deduces assertion c) for $\Omega_t^{\delta}$.

It remains to check assertion d). A moment map for the Hamiltonian $K$-manifold $(K\cdot\lambda\times\got{p},\Gamma^*\Omega_{G\cdot\lambda})$ is the map defined for all $(k\lambda,Z)\in K\cdot\lambda\times\got{p}$ by
\[
\Phi_{\Gamma^*\Omega_{G\cdot\lambda}}(k\lambda,Z) := (e^Zk\lambda)|_{\got{k}} = k\lambda\circ\left(\sum_{n\geqslant 0}\frac{\ad(Z)^{2n}}{(2n)!}\right).
\]
Obviously, the map  $\Phi^{\delta}$ defined for all $(k\lambda,Z)\in K\cdot\lambda\times\got{p}$ by
\[
\Phi^{\delta}(k\lambda,Z) := k\lambda + \delta \lambda_0\circ\left(\sum_{n\geqslant 0}\frac{\ad(Z)^{2n}}{(2n)!}\right),
\]
is a moment map for $(K\cdot\lambda\times\got{p},\Omega^{\delta})$. Consequently, we obtain a moment map for $\Omega_t^{\delta}$ by taking $\phi_t^{\delta} := t\Phi_{\Gamma^*\Omega_{G\cdot\lambda}} + (1-t)\Phi^{\delta}$.

We define for any $t\in[0,1]$ the element $\lambda_t := t\lambda + (1-t)\delta \lambda_0$ of $\Chol$, and denote by $H_{\lambda_t} = tH_{\lambda} + (1-t)\delta z_0$ the associated vector in $\got{t}$.

Recall that $\Phi_t^{\delta}$ is $K$-equivariant, so we only need to consider the points $(\lambda,Z)$ with $Z\in\got{p}$. We make a first computation:
\begin{align*}
\langle\phi_t^{\delta}(\lambda,Z),H_{\lambda_t}\rangle & = \left\langle t\Phi_{\Gamma^*\Omega_{G\cdot\lambda}}(\lambda,Z)+(1-t)\Phi^{\delta}(\lambda,Z), H_{\lambda_t}\right\rangle\ , \\
& = t B_{\theta}\bigl(H_{\lambda},\sum_{n\geq 0}\frac{\ad(Z)^{2n}}{(2n)!}H_{\lambda_t}\bigr) \\
& \quad + (1-t)\delta B_{\theta}\bigl(z_0,\sum_{n\geq 0}\frac{\ad(Z)^{2n}}{(2n)!}H_{\lambda_t}\bigr) \\
& \quad + (1-t)\langle \lambda, H_{\lambda_t}\rangle\ , \\
& = B_{\theta}\bigl(H_{\lambda_t},\sum_{n\geq 0}\frac{\ad(Z)^{2n}}{(2n)!}H_{\lambda_t}\bigr) + (1-t)\langle\lambda, H_{\lambda_t}\rangle\ .
\end{align*}
Since $\lambda$ and $\lambda_t$ are both in $\Chol$, the number $\langle\lambda,H_{\lambda_t}\rangle$ is positive, because $\langle\lambda,H_{\lambda_t}\rangle = 2\sum_{\alpha\in\got{R}^+}\alpha(H_{\lambda})\alpha(H_{\lambda_t}) > 0$. But, $\ad(Z)$ is symmetric for $B_{\theta}$, then
\begin{align*}
\langle\phi_t^{\delta}(\lambda,Z),H_{\lambda_t}\rangle & \geq \sum_{n\geq 0}\frac{1}{(2n)!}B_{\theta}\bigl(\ad(Z)^{n}H_{\lambda_t},\ad(Z)^{n}H_{\lambda_t}\bigr) \\
& \geq \frac{1}{2}B_{\theta}\bigl(\ad(Z)H_{\lambda_t},\ad(Z)H_{\lambda_t}\bigr).
\end{align*}
Equation \eqref{eq:minoration_Btheta(HadZ2H)} of Lemma \ref{lem:positivity_powers_ad(Z)_forHlambda} yields now
\[
\langle\phi_t^{\delta}(\lambda,Z),H_{\lambda_t}\rangle \geqslant \frac{m_{\lambda_t}^2}{2}\|Z\|^2.
\]
Finally, we get the following inequalities, for all $(k\lambda,Z)\in K\cdot\lambda\times\got{p}$,
\[
\|\phi_t^{\delta}(k\lambda,Z)\| \geq \frac{m_{\lambda_t}^2}{2\|H_{\lambda_t}\|}\|Z\|^2 \geq \Bigl(\inf_{t\in[0,1]}\frac{m_{\lambda_t}^2}{2\|H_{\lambda_t}\|}\Bigr)\|Z\|^2,
\]
because the norms are $K$-invariant. Note that the constant number $\inf_{t\in[0,1]}\frac{m_{\lambda_t}^2}{2\|H_{\lambda_t}\|}$ is positive, since $H_{\lambda_t}$ is never zero and by continuity on the compact set $[0,1]$. Therefore, assertion d) of Corollary \ref{cor:moserthm_segment_and_poincarecondition} is checked.
\end{proof}

\subsection{Proof of Theorem \ref{thm:segment_is_symplectic}}
\label{subsection:proof_of_thm_segment_is_symplectic}

Define, for all $Z\in\got{p}$, the two nondegenerate skew-symmetric bilinear forms $\omega_0^{\delta}|_Z$ and $\omega_1|_Z$ on $\got{k}/\got{k}_{\lambda}\oplus\got{p}$, by
\[
\omega_0^{\delta}|_Z\bigl((X,A),(Y,B)\bigr) = \langle\lambda,[X,Y]\rangle + \delta B_{\theta}(z_0,[A,B]),
\]
for any $\delta >0$, and
\[
\omega_1|_Z\bigl((X,A),(Y,B)\bigr) = \langle\lambda,[X+\chi_Z(A),Y+\chi_Z(B)]\rangle + \langle\lambda,[A,B]\rangle,
\]
for all $(X,A), (Y,B) \in \got{k}/\got{k}_{\lambda}\oplus\got{p}$.

\begin{lem}
\label{lem:necessarycondition_if_linearforms_opposed}
Let $\delta>0$. Assume that there exists $Z\in\got{p}$, $(X,A)\in\got{k}/\got{k}_{\lambda}\oplus\got{p}$ nonzero and $c>0$ such that
\begin{equation}
\label{eq:linearforms_opposed}
\omega_1|_Z\bigl((X,A),(Y,B)\bigr) = -c\,\omega_0^{\delta}|_Z\bigl((X,A),(Y,B)\bigr),
\end{equation}
for all $(Y,B)\in\got{k}/\got{k}_{\lambda}\oplus\got{p}$. Then $\delta \leq b_{\lambda}$.
\end{lem}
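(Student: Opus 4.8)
The plan is to distill the hypothesis \eqref{eq:linearforms_opposed} into a single vector identity in $\got{p}$ and then evaluate that identity against $\ad(z_0)A$. First I would feed into \eqref{eq:linearforms_opposed} the test vectors $(Y,0)$, $Y\in\got{k}/\got{k}_{\lambda}$. Since $(X',Y)\mapsto\langle\lambda,[X',Y]\rangle$ is the nondegenerate form $\Omega_{K\cdot\lambda}$ on $\got{k}/\got{k}_{\lambda}$, this forces $(1+c)X+\chi_Z(A)\equiv 0\pmod{\got{k}_{\lambda}}$, that is $X=-\tfrac{1}{1+c}\chi_Z(A)$ in $\got{k}/\got{k}_{\lambda}$; since $\chi_Z(0)=0$, the assumption $(X,A)\neq 0$ then forces $A\neq 0$. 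Substituting this value of $X$ and testing \eqref{eq:linearforms_opposed} against the vectors $(0,B)$, $B\in\got{p}$, the $\langle\lambda,[\,\cdot\,,\cdot\,]\rangle$-term of $\omega_1|_Z$ reduces to $\tfrac{c}{1+c}\langle\lambda,[\chi_Z(A),\chi_Z(B)]\rangle$ (using $X+\chi_Z(A)\equiv\tfrac{c}{1+c}\chi_Z(A)$), leaving
\[
\tfrac{c}{1+c}\langle\lambda,[\chi_Z(A),\chi_Z(B)]\rangle+\langle\lambda,[A,B]\rangle=-c\delta\,B_{\theta}(z_0,[A,B]),\qquad\forall\,B\in\got{p}.
\]

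Next I would turn this scalar identity into a vector one by rewriting each term as a $B_{\theta}$-pairing against $B$. Using the $\ad$-invariance of $B_{\got{g}}$, the fact that $B_{\theta}$ agrees with $-B_{\got{g}}$ on $\got{k}$ and with $B_{\got{g}}$ on $\got{p}$, and the $B_{\theta}$-symmetry of $\chi_Z$ (Lemma \ref{lem:chi_Z_and_eigenvalues}), one gets $\langle\lambda,[A,B]\rangle=-B_{\theta}(\ad(H_{\lambda})A,B)$, $B_{\theta}(z_0,[A,B])=-B_{\theta}(\ad(z_0)A,B)$, and $\langle\lambda,[\chi_Z(A),\chi_Z(B)]\rangle=B_{\theta}(\chi_Z\ad(H_{\lambda})\chi_Z(A),B)$, the last vector lying in $\got{p}$ because $\chi_Z$ sends $\got{k}$ into $\got{p}$ and $\got{p}$ into $\got{k}$. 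As $B$ ranges over $\got{p}$ and $B_{\theta}$ is nondegenerate there, this becomes
\[
\ad(H_{\lambda})A+c\delta\,\ad(z_0)A=\tfrac{c}{1+c}\,\chi_Z\ad(H_{\lambda})\chi_Z(A)\qquad\text{in }\got{p}.
\]

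Finally I would pair both sides of this identity with $\ad(z_0)A\in\got{p}$. On the left-hand side, $B_{\theta}(\ad(z_0)A,\ad(z_0)A)=\|A\|^2$ since $\ad(z_0)$ is $B_{\theta}$-orthogonal on $\got{p}$, while $B_{\theta}(\ad(H_{\lambda})A,\ad(z_0)A)\geq 0$ because $\ad(H_{\lambda})$ and $\ad(z_0)$ act on each noncompact root plane by the positive scalars $\beta(H_{\lambda})$ and $\beta(z_0)=1$ times the same rotation, $\beta(H_{\lambda})>0$ holding as $\lambda\in\Chol$ (this is the computation in the proof of Lemma \ref{lem:positivity_powers_ad(Z)_forHlambda}); hence the left-hand side is $\geq c\delta\|A\|^2$. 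On the right-hand side, moving one $\chi_Z$ across by symmetry and applying Cauchy--Schwarz, together with $\|\ad(z_0)A\|=\|A\|$, the fact that $b_{\lambda}$ equals the operator norm of $\ad(H_{\lambda})$ on $\got{g}$ (the mixed $\got{k}$--$\got{p}$ brackets pairing trivially against $\lambda$, so $\|\ad(H_{\lambda})\chi_Z(A)\|\le b_{\lambda}\|\chi_Z(A)\|$), and $\|\chi_Z\|<1$ (Lemma \ref{lem:chi_Z_and_eigenvalues}), one bounds its absolute value by $\tfrac{c}{1+c}\,b_{\lambda}\,\|\chi_Z\|^2\,\|A\|^2<\tfrac{c}{1+c}\,b_{\lambda}\,\|A\|^2$. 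Since the two sides are equal, and $A\neq 0$, $c>0$, dividing by $c\|A\|^2$ yields $\delta<\tfrac{b_{\lambda}}{1+c}\leq b_{\lambda}$.

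The one genuinely delicate term is $\langle\lambda,[\chi_Z(A),\chi_Z(B)]\rangle$, which a priori involves a bracket taken inside $\got{k}$ and carries no visible sign or size control; the whole point of the argument is to absorb it into the vector identity above and then play off the contraction $\|\chi_Z\|<1$ against the interpretation of $b_{\lambda}$ as $\|\ad(H_{\lambda})\|_{\mathrm{op}}$. The routine but error-prone ingredient throughout is the bookkeeping of the signs $B_{\theta}$ produces on $\got{k}$ versus $\got{p}$.
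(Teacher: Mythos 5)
Your proof is correct and is essentially the paper's argument in a slightly repackaged form: eliminating $X$ via the $(Y,0)$ tests, then pairing your vector identity in $\got{p}$ with $\ad(z_0)A$ is exactly the paper's step of substituting the test vector $B=-[z_0,A]$ into the scalar identity, and your use of $b_{\lambda}=\|\ad(H_{\lambda})\|_{\mathrm{op}}$ together with Cauchy--Schwarz and $\|\chi_Z\|<1$ reproduces the paper's direct bound $\langle\lambda,[\chi_Z(A),\chi_Z([z_0,A])]\rangle\leq b_{\lambda}\|\chi_Z(A)\|\,\|\chi_Z([z_0,A])\|\leq b_{\lambda}\|A\|^2$. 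The positivity of the cross term via Lemma \ref{lem:positivity_powers_ad(Z)_forHlambda} and the conclusion $\delta\leq b_{\lambda}/(1+c)\leq b_{\lambda}$ likewise match the paper.
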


\begin{proof}
Let $Z\in\got{p}$, $(X,A)\in\got{k}/\got{k}_{\lambda}\oplus\got{p}$ nonzero and $c>0$, such that \eqref{eq:linearforms_opposed} is valid for all $(Y,B)\in\got{k}/\got{k}_{\lambda}\oplus\got{p}$. Then, taking $B=0$, we have
\[
\langle\lambda,[X+\chi_Z(A),Y]\rangle = -c\langle\lambda,[X,Y]\rangle, \quad \forall Y\in\got{k}/\got{k}_{\lambda}.
\]
This yields that $X+\chi_Z(A) = -c X \mod \got{k}_{\lambda}$, that is, $\chi_Z(A) = -(c+1)X \mod \got{k}_{\lambda}$. But, by linearity of $\chi_Z$, we cannot have $A=0$, because $X$ would also be zero, which would contradict the hypothesis ``$(X,A)\neq 0$''. The equation \eqref{eq:linearforms_opposed} is now reduced to
\[
\langle\lambda,[X+\chi_Z(A),\chi_Z(B)]\rangle+\langle\lambda,[A,B]\rangle = -c\delta B_{\theta}(z_0,[A,B]),
\]
satisfied for all $B\in\got{p}$. It is equivalent to
\begin{equation}
\label{eq:equality_necessarycondition_1}
\delta B_{\theta}(z_0,[A,B]) + \frac{1}{c}\langle\lambda,[A,B]\rangle = \langle\lambda,[X,\chi_Z(B)]\rangle,
\end{equation}
for all $B\in\got{p}$, since $X+\chi_Z(A) = -c X \mod \got{k}_{\lambda}$. Taking $B=-[z_0,A]\in\got{p}$ in \eqref{eq:equality_necessarycondition_1}, we get
\begin{align*}
0 & < \delta B_{\theta}(z_0,[A,-[z_0,A]]) + \frac{1}{c}\langle\lambda,[A,-[z_0,A]]\rangle \\
& \leq -\frac{1}{c+1}\langle\lambda,[\chi_Z(A),\chi_Z(-[z_0,A])]\rangle
\end{align*}
Note that $\|A\|^2 = B_{\theta}(A,A) = B_{\theta}(z_0,[A,-[z_0,A]]))$. Moreover, we have $\frac{1}{c}\langle\lambda,[A,-[z_0,A]]\rangle = \frac{1}{c}B_{\theta}(z_0,\ad(A)^2 H_{\lambda})\geq 0$, from \eqref{eq:minoration_Btheta(z0adZ2H)} of Lemma \ref{lem:positivity_powers_ad(Z)_forHlambda} applied to $A\in\got{p}$. We deduce the following inequalities,
\begin{align*}
\delta\|A\|^2 & \leq \delta B_{\theta}(z_0,[A,-[z_0,A]]) + \frac{1}{c}\langle\lambda,[A,-[z_0,A]]\rangle \\
& \leq \frac{1}{c+1}\langle\lambda,[\chi_Z(A),\chi_Z([z_0,A])]\rangle \\
& \leq \frac{1}{c+1} b_{\lambda}\|\chi_Z(A)\|.\|\chi_Z([z_0,A])\|.
\end{align*}
The number $c$ is positive, so $0<\frac{1}{c+1}\leq 1$. Furthermore, by Lemma \ref{lem:chi_Z_and_eigenvalues}, the linear operator $\chi_Z$ is symmetric for $B_{\theta}$ and all its eigenvalues are in $]-1,1[$. Hence,
\[
\|\chi_Z(W)\| \leq \|W\| \quad \forall W\in\got{g}.
\]
Consequently, $\|\chi_Z([z_0,A])\| \leq \|[z_0,A]\| = \|A\|$. So $\delta\|A\|^2 \leq b_{\lambda}\|A\|^2$, with $A\neq 0$. Then we conclude that $\delta\leq b_{\lambda}$.
\end{proof}

\begin{proof}[Proof of Theorem \ref{thm:segment_is_symplectic}]
We are going to prove that $\Omega_t$ is non-degenerate at any point of $K\cdot\lambda\times\got{p}$, and leave the other verifications to the reader. Since $\Omega_t$ is $K$-invariant, it is enough to show that it is nondegenerate at the points of $\{\lambda\}\times\got{p}$.

Let $\delta>b_{\lambda}$. By contraposition of the statement of Lemma  \ref{lem:necessarycondition_if_linearforms_opposed}, for all $Z\in\got{p}$, all $(X,A)\in\got{k}/\got{k}_{\lambda}\oplus\got{p}$  and all $c>0$, we must have the following inequality:
\[
\imath\bigl((X,A)\bigr)\omega_1|_Z \not\equiv -c\, \imath\bigl((X,A)\bigr)\omega_0^{\delta}|_Z.
\]
It means that, for all $Z\in\got{p}$ and $X\oplus A\in\got{k}/\got{k}_{\lambda}\oplus\got{p}\setminus\{(0,0)\}$, there exists a vector $(Y,B)\in\got{k}/\got{k}_{\lambda}\oplus\got{p}$ such that
\[
\langle\lambda,[X,Y]\rangle+\delta B_{\theta}(z_0,[A,B]) >0
\]
and
\[
\langle\lambda,[X+\chi_Z(A),Y+\chi_Z(B)]\rangle+\langle\lambda,[A,B]\rangle >0.
\]
Indeed, it is a basic fact from linear algebra that, if two nonzero linear forms  $f_1$ and $f_2$ on a finite dimensional $\R$-vector space $F$ satisfy the assertion
\[
\forall x\in F, \quad f_1(x) >0 \quad \Longrightarrow \quad f_2(x)\leq 0,
\]
then they have the same kernel, and thus there exists a constant number $c\in \R^*$ such that $f_2 = cf_1$. But clearly one must have $c<0$, otherwise $f_1(x)>0$ implies $f_2(x)=cf_1(x)>0$.

Now, from \eqref{eq:defn_chiZ} and \eqref{eq:formula_GammaOmegaGlambda} and the definition of $\Omega^{\delta}$, the two above inequalities ensure that
\[
\Omega^{\delta}|_{(\lambda,Z)}\left(\left([1,X],(\Psi_Z^+)^{-1}(A)\right),\left([1,Y],(\Psi_Z^+)^{-1}(B)\right)\right) >0
\]
and
\[
(\Gamma^*\Omega_{G\cdot\lambda})|_{(\lambda,Z)}\left(\left([1,X],(\Psi_Z^+)^{-1}(A)\right),\left([1,Y],(\Psi_Z^+)^{-1}(B)\right)\right) > 0.
\]
So we clearly have
\[
\Omega_t^{\delta}|_{(\lambda,Z)}\left(\left([1,\overline{X}],(\Psi_Z^+)^{-1}(A)\right),\left([1,\overline{Y}],(\Psi_Z^+)^{-1}(B)\right)\right) > 0
\]
for all $t\in[0,1]$, which proves that $\Omega_t^{\delta}|_{(\lambda,Z)}$ is nondegenerate.
\end{proof}

\bibliographystyle{plain}
\bibliography{bibliography}

\end{document}